\newtheorem{theorem}{Theorem}[section]
\newtheorem{claim}{Claim}[theorem]
\newtheorem{lemma}[theorem]{Lemma}
\newtheorem{problem}[theorem]{Problem}
\newtheorem{proposition}[theorem]{Proposition}
\newtheorem{corollary}[theorem]{Corollary}
\newtheorem{conjecture}[theorem]{Conjecture}
\theoremstyle{definition}
\newtheorem{definition}[theorem]{Definition}
\newtheorem{remark}[theorem]{Remark}
\newtheorem{example}[theorem]{Example}
\newcommand{\bF}{\mathbb F}
\newcommand{\bR}{\mathbb R}
\newcommand{\bC}{\mathbb C}
\newcommand{\cA}{\mathcal{A}}
\newcommand{\cF}{\mathcal{F}}
\newcommand{\cH}{\mathcal{H}}
\newcommand{\cM}{\mathcal{M}}
\newcommand{\cN}{\mathcal{N}}
\newcommand{\cX}{\mathcal{X}}
\DeclareMathOperator{\si}{si}
\DeclareMathOperator{\cl}{cl}
\newcommand{\elem}{\varepsilon}
\newcommand{\del}{\!\setminus\!}
\title{Excluding a line from positroids}
\author[1]{Jonathan Boretsky\thanks{The first author is partially supported by NSERC grant PDF 587781. Le premier auteur est partiellement soutenu par la bourse CRSNG PDF 587781.}}
\author[2]{Zach Walsh\thanks{The second author was supported in part by NSF grant DMS-2452015.}}
\affil[1]{\footnotesize McGill University, Department of Mathematics and Statistics, Montreal, QC, Canada}
\affil[2]{\footnotesize Auburn University, Department of Mathematics and Statistics, Auburn, AL, USA}
\date{\today}
\begin{document}

\maketitle

\begin{abstract}
For all positive integers $\ell$ and $r$, we determine the maximum number of elements of a simple rank-$r$ positroid without the rank-$2$ uniform matroid $U_{2,\ell+2}$ as a minor, and characterize the matroids with the maximum number of elements.
We prove this as a consequence of a more general result, which also determines the maximum number of elements of a simple rank-$r$ bicircular matroid, lattice path matroid, multi-path matroid, or colaminar matroid with no $U_{2,\ell+2}$-minor.
This result continues a long line of research into upper bounds on the number of elements of matroids from various classes that forbid $U_{2,\ell+2}$ as a minor. This is the first paper to study positroids in this context, and it suggests methods to study similar problems for other classes of matroids, such as gammoids or base-orderable matroids.
\end{abstract}

\section{Introduction} \label{sec: introduction}

Extremal matroid theory was formally introduced in 1993 with the following problem posed by Joseph Kung \cite{Kung-1993}.

\begin{problem} \label{prob: Kung problem}
Let $\cM$ be a class of matroids. 
Determine the maximum number of elements of a simple rank-$r$ matroid in $\cM$, and characterize the matroids for which equality holds.
\end{problem}

This maximum does not exist for all classes of matroids. 
For instance, there is no finite answer to Problem~\ref{prob: Kung problem} for the class of all matroids, or for any class containing all uniform matroids.
However, Kung \cite[Theorem 4.3]{Kung-1993} proved that every simple rank-$r$ matroid with no $U_{2, \ell+2}$-minor has at most $\frac{\ell^r - 1}{\ell - 1}$ elements.
If $\ell$ is a prime power, then this bound is tight and is achieved by the rank-$r$ projective geometry over the finite field of order $\ell$. It is thus natural to study Problem~\ref{prob: Kung problem} for classes of matroids without $U_{2,\ell}$-minors. 

In this paper, we study the class of \textit{positroids} without $U_{2,\ell}$-minors. Positroids, which are matroids representable by a matrix over $\bR$ with all maximal subdeterminants nonnegative, occupy a central position at the intersection of combinatorics, algebraic geometry, and physics. They serve as combinatorial models for the totally nonnegative Grassmannian, which in turn underlies the construction of the amplituhedron in quantum field theory. The study of amplituhedra and related objects remains an active and influential area in theoretical physics \cite{AHT}. Through these connections, positroids emerge as unifying objects that integrate discrete combinatorial frameworks with geometric and physical theory. Combinatorially, they are remarkably rich: Postnikov~\cite{postnikov} showed that positroids are in bijection with decorated permutations, Grassmann necklaces, plabic graphs, and \reflectbox{L}-diagrams, providing multiple equivalent viewpoints.
Positroids also form a minor-closed class of matroids. It is thus natural to ask extremal questions about positroids and their minors. There has been work enumerating both positroids \cite{Williams05} and connected positroids \cite{ARW16}, but as far as the authors are aware, no other extremal results about positroids are known. Our main result tightly resolves Problem~\ref{prob: Kung problem} for the class of $U_{2,\ell+2}$-minor-free positroids.

\begin{theorem} \label{thm: main}
For all integers $r, \ell \ge 1$, if $M$ is a simple rank-$r$ positroid with no $U_{2,\ell+2}$-minor, then $|M| \le \ell(r - 1) + 1$.
Moreover, if $r \ge 2$ then equality holds if and only if $M$ can be obtained by taking parallel connections of copies of $U_{2, \ell+1}$.
\end{theorem}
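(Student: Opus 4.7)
The plan is to proceed by induction on the rank $r$. The base case $r=1$ is trivial; for $r=2$, any simple rank-$2$ matroid is $U_{2,n}$, so the excluded-minor hypothesis forces $n \le \ell+1$, with equality iff $M \cong U_{2,\ell+1}$ (the parallel connection of a single copy).

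For $r \ge 3$ I first reduce to the connected case. If $M = M_1 \oplus M_2$ with $r(M_i) = r_i \ge 1$, applying the inductive bound to each component gives
\[
|M| \le \ell(r_1-1)+1 + \ell(r_2-1)+1 = \ell(r-2)+2 \le \ell(r-1)+1,
\]
with strict inequality for $\ell \ge 2$. If $\ell = 1$, equality in each component forces $M_i = U_{r_i, r_i}$, so $M \cong U_{r,r}$, which is a parallel connection of $r-1$ copies of $U_{2,2} = U_{2,\ell+1}$.

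Next, suppose $M$ is a simple connected positroid of rank $r \ge 3$ with no $U_{2,\ell+2}$-minor. The strategy is to express $M$ as a parallel connection $M = M_1 \parallel_p M_2$ of two strictly smaller simple positroids (each without $U_{2,\ell+2}$-minor) sharing a single element $p$, and then induct on each piece. If $M$ is uniform, $M \cong U_{r,n}$, then contracting any $r-2$ elements yields $U_{2, n-r+2}$, so $n \le r + \ell - 1$, which is strictly less than $\ell(r-1)+1$ whenever $r \ge 3$ and $\ell \ge 2$. If $M$ is non-uniform, the non-crossing cyclic flat structure of positroids (in the sense of Ardila--Rinc\'on--Williams) produces a proper non-trivial cyclic flat $F$ and an element $p$ whose associated $2$-separation exhibits $M$ as the parallel connection $M_1 \parallel_p M_2$ with $r(M_1)+r(M_2) = r+1$ and $|M_1|+|M_2| = |M|+1$. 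Applying the inductive bound to $M_1, M_2$,
\[
|M| = |M_1| + |M_2| - 1 \le (\ell(r_1-1)+1) + (\ell(r_2-1)+1) - 1 = \ell(r-1)+1,
\]
and equality forces each $M_i$ to be extremal; by induction each $M_i$ is a parallel connection of $U_{2,\ell+1}$'s, hence so is $M$.

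The main obstacle is producing the $2$-separation in the non-uniform sub-case: one must leverage the non-crossing arrangement of positroid cyclic flats, together with the line-size bound $|L| \le \ell+1$ forced by the excluded minor, to locate a proper cyclic flat $F$ and an element $p$ that decompose $M$ as a parallel connection of two smaller positroids. Choosing $F$ carefully (for instance, as a minimal element of the non-crossing hierarchy, or as the closure of a suitable long line) should ensure that both pieces remain simple positroids, that neither contains $U_{2,\ell+2}$ as a minor, and that the extremal case propagates correctly through the recursion to pin down parallel connections of $U_{2,\ell+1}$'s as the unique equality configurations.
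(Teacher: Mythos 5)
Your reduction to the connected case and your treatment of the uniform sub-case are fine, but the heart of your argument --- that every connected, non-uniform positroid with no $U_{2,\ell+2}$-minor decomposes as a parallel connection $P_p(M_1,M_2)$ of two strictly smaller positroids --- is false, and this is exactly where the real content of the theorem lives. A parallel connection in which both parts have at least three elements is never $3$-connected, yet there are many $3$-connected, non-uniform positroids with no $U_{2,\ell+2}$-minor: the rank-$r$ whirl $\mathcal{W}^r$ is one, and Section 6 of the paper exhibits a whole family $W(r,\ell)$ of such examples with $r + r\lfloor (\ell-1)/2\rfloor$ elements. For these matroids no choice of cyclic flat $F$ and element $p$ can produce your decomposition. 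More basically, the non-crossing structure of positroid cyclic flats does not yield $2$-separations: a proper nontrivial cyclic flat $F$ only satisfies $r(F)+r(E-F)\ge r(M)$, and for a long line $L$ of a rank-$3$ whirl one has $r(L)+r(E-L)=r(M)+2$, nowhere near a $2$-separation. So the step you flag as ``the main obstacle'' is not a technical detail to be filled in; it is a false intermediate claim.

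The paper's route is different. It fixes a long line $L$, considers the connected components of $M/L$, and uses two excluded-minor propositions (in a rank-$3$ positroid, no line has three points each lying on a second long line; plus a rank-$4$ analogue involving three planes through a common line) to show that at most two points of $L$ are attached to long lines outside $L$ in each component $N$, which gives $|N\setminus L|\le \ell(r(N)-2)$ and hence the bound $|M|\le \ell(r-2)+|L|\le \ell(r-1)+1$. The parallel-connection structure appears only in the equality analysis, and even there it requires an argument --- using $M(K_4)$-freeness and the fact that every long line has exactly $\ell+1$ points --- to locate a point $x$ on two long lines and to prove that $M/x$ is disconnected, after which \cite[Theorem 7.1.16(ii)]{Oxley-2011} gives $M=P_x(M_1,M_2)$. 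Any repair of your proposal would need comparable machinery to handle the $3$-connected case, since that case cannot be dispatched by decomposition.
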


We will obtain Theorem \ref{thm: main} as a corollary of a stronger result (Theorem \ref{thm: main result with proof}) that also gives a sharp bound for other well-studied minor-closed $M(K_4)$-free classes of $\bR$-representable matroids; we will define these classes in Section 6.

\begin{theorem} \label{thm: main other classes}
Let $r$ and $\ell$ be positive integers and let $\cM$ be one of the following classes of matroids: lattice path, multi-path, colaminar, bicircular, or path-circular.
The the maximum number of elements of a simple rank-$r$ matroid in $\cM$ with no $U_{2,\ell+2}$-minor is $\ell(r - 1) + 1$.
\end{theorem}

Theorem \ref{thm: main} lies at the intersection of three areas of research: extremal matroid theory, structural properties of positroids, and structural properties of matroids with no $M(K_4)$-minor.
We will explain how Theorem \ref{thm: main} continues lines of research in these three areas, beginning with extremal matroid theory.
The following beautiful result of Geelen and Nelson \cite[Theorem 1.2]{Geelen-Nelson-2010} generalizes the initial work of Kung and tightly solves Problem \ref{prob: Kung problem} for the class of matroids with no $U_{2,\ell+2}$-minor, for all values of $\ell$ and all sufficiently large values of $r$.

\begin{theorem}[Geelen, Nelson \cite{Geelen-Nelson-2010}] \label{thm: exlude a line}
Let $\ell\ge 2$ be an integer and let $q$ be the largest prime power at most $\ell$.
For $r$ sufficiently large, every simple rank-$r$ matroid with no $U_{2,\ell+2}$-minor has at most $\frac{q^r - 1}{q - 1}$ elements, with equality only for the rank-$r$ projective geometry over the order-$q$ finite field.
\end{theorem}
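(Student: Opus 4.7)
The plan is a two-phase approach. First I would extract a bound of the correct asymptotic shape up to a multiplicative constant from the Growth Rate Theorem of Geelen, Kung and Whittle, and then sharpen that constant to $1$ by a structural stability argument that exploits both the forbidden minor $U_{2,\ell+2}$ and the Matroid Minor Structure Theorem of Geelen, Gerards and Whittle.

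For the first phase, let $\cM_\ell$ denote the class of matroids with no $U_{2,\ell+2}$-minor, which is minor-closed. Two observations pin down the relevant prime power. If $p \le \ell$ is a prime power, then the longest line of $\PG(k,p)$ has size $p+1 \le \ell+1 < \ell+2$, so $\PG(k,p) \in \cM_\ell$ and $\cM_\ell$ contains every $\GF(p)$-representable matroid. Conversely, if $q' \ge \ell+1$ is a prime power, then $\PG(2,q')$ contains a line of size $q'+1 \ge \ell+2$, so $\PG(2,q') \notin \cM_\ell$. Taking $q$ to be the largest prime power at most $\ell$, the Growth Rate Theorem then yields a constant $c(\ell)$ such that every rank-$r$ simple $M \in \cM_\ell$ satisfies $|M| \le c(\ell)\cdot \frac{q^r - 1}{q-1}$.

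The second phase is the crux. Using the Matroid Minor Structure Theorem, every member of the proper minor-closed class $\cM_\ell$ is, up to a bounded-rank perturbation and bounded-width branch-decomposition adjustments, representable over a fixed finite field; together with the exponential density from Phase 1 this field must be $\GF(q)$. I would then argue, by induction on $r$ combined with a carefully chosen contraction (passing to the simplification of $M/e$ for an element $e$ whose contraction still has density close to the extremal value), that for $r$ sufficiently large any near-extremal $M$ is in fact a spanning restriction of $\PG(r-1,q)$. The equality case follows, since $|M| = \frac{q^r-1}{q-1}$ then forces $M = \PG(r-1,q)$.

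The main obstacle is sharpening the multiplicative constant from $c(\ell)$ down to the exact value $1$: routine density counting is insensitive to any multiplicative factor, so it cannot see whether a single element ``extra'' to a $\GF(q)$-template is present. The key stability statement needed is that, for $r$ sufficiently large, adding even one element to a rank-$r$ $\GF(q)$-representable matroid beyond the structure of a $\PG(r-1,q)$-restriction must eventually create a $U_{2,\ell+2}$-minor under contractions; this is plausible because any non-$\GF(q)$-representable simple extension introduces a line incompatible with the $\GF(q)$-geometry, and iterated contractions should amplify this incompatibility into a forbidden long line. Proving this cleanly, and then quantifying ``sufficiently large $r$'' in terms of $\ell$, is where I expect the bulk of the technical work--in particular the deep machinery of the matroid minors project--to be indispensable.
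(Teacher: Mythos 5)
First, a point of order: the paper does not prove this statement at all --- it is quoted as background from Geelen and Nelson \cite{Geelen-Nelson-2010} --- so there is no internal proof to compare against, and your sketch must be judged against the published argument.

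Your Phase 1 is fine (the identification of $q$ as the largest prime power at most $\ell$, and the Growth Rate Theorem giving $|M| \le c(\ell)\,q^r$, are both correct), but Phase 2 contains a genuine gap. The Matroid Minor Structure Theorem of Geelen, Gerards and Whittle describes minor-closed classes of matroids representable over a \emph{fixed finite field}; it says nothing about an arbitrary minor-closed class of abstract matroids such as $\cM_\ell$, which for large $\ell$ contains many non-representable matroids (free spikes, free swirls, and so on). So the assertion that every member of $\cM_\ell$ is a bounded-rank perturbation of a $\GF(q)$-representable matroid is unjustified as stated. Worse, the ``key stability statement'' you isolate at the end --- that any near-extremal $M$ must be a spanning restriction of $\PG(r-1,q)$ --- is essentially the entire content of the theorem beyond Kung's bound; labelling it ``plausible'' and deferring it to unspecified deep machinery leaves the theorem unproved. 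The actual route of Geelen and Nelson is different and does not use the structure theorem: they combine the theorem of Geelen and Kabell that every sufficiently dense simple matroid with no $U_{2,\ell+2}$-minor has a $\PG(n-1,q')$-minor for some prime power $q' \le \ell$ and large $n$, with a direct argument showing that a matroid with no $(\ell+2)$-point line that has a large projective geometry over $\GF(q)$ as a minor can have at most $\frac{q^r-1}{q-1}$ points, with equality forcing the projective geometry itself. Replacing your appeal to the structure theorem with that projective-geometry-minor argument is what is actually needed to close the gap.
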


Projective geometries are not representable over fields of characteristic zero (see \cite[page 205, Exercise 8]{Oxley-2011}), inviting the following question: for a field $\bF$ with characteristic zero, what is the maximum number of elements of a simple rank-$r$ $\bF$-representable matroid with no $U_{2, \ell+2}$-minor?
This was answered by Geelen, Nelson, and Walsh \cite[Theorem 1.5]{Geelen-Nelson-Walsh-2024} in the case that $\bF = \bC$.

\begin{theorem}[Geelen, Nelson, Walsh \cite{Geelen-Nelson-Walsh-2024}] \label{thm: complex exclude a line}
Let $\ell\ge 2$ be an integer and let $r$ be a sufficiently large integer.
Then every simple rank-$r$ $\bC$-representable matroid with no $U_{2,\ell+2}$-minor has at most $(\ell-1)\binom{r}{2} + r$ elements, with equality only for the rank-$r$ Dowling geometry over the cyclic group of order $\ell-1$.
\end{theorem}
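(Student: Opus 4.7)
The plan is to combine the Growth Rate Theorem of Geelen-Kung-Whittle with a structural description of dense matroids in the class and an algebraic obstruction coming from characteristic zero.

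For the lower bound, exhibit $\DG(r, \bZ/\ell\bZ)$: this matroid has $r$ joints and $\ell$ elements on each of the $\binom{r}{2}$ lines joining pairs of joints, totaling $\ell\binom{r}{2}+r$ elements. Since $\bC$ contains a primitive $\ell$-th root of unity, it is $\bC$-representable; every line has at most $\ell+1$ points, so $U_{2,\ell+2}$ is not a minor.

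For the upper bound, first invoke the Growth Rate Theorem: as the class of $\bC$-representable matroids with no $U_{2,\ell+2}$-minor is minor-closed and excludes a uniform matroid, its growth rate function is linear, quadratic, or exponential. It is at least quadratic (witnessed by the Dowling geometries) and at most quadratic, since exponential growth would require a large projective-geometry minor $\PG(r-1,q)$, which demands positive characteristic for $r\ge 4$ and hence is not $\bC$-representable. This gives an upper bound $cr^2$ for large $r$ with some constant $c$. To sharpen $c$ to $\ell/2$, the next step is to apply the refinement (essentially a structure theorem for dense matroids with no long line) saying that any sufficiently dense matroid with no $U_{2,\ell+2}$-minor is, after a bounded perturbation, a restriction of a Dowling geometry $\DG(r, H)$ for some finite group $H$. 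Complex representability forces $H$ to embed into $\bC^\times$, so $H$ is cyclic, and the line-length constraint forces $|H|\le \ell$; the maximum count is attained by $H=\bZ/\ell\bZ$.

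For the extremal characterization, proceed by induction on $r$. Contracting a joint of the identified Dowling-geometry substructure produces a rank-$(r-1)$ simple $\bC$-representable matroid with no $U_{2,\ell+2}$-minor and the extremal number of elements, hence (by induction) $\DG(r-1, \bZ/\ell\bZ)$. Any element of $M$ outside the putative Dowling structure must either lie on an existing Dowling line (forcing $U_{2,\ell+2}$ as a minor after contracting suitable joints) or create geometry incompatible with $\bC$-representability of the ambient frame matrix.

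The main obstacle will be the stability step that converts the approximate structural conclusion (``close to a Dowling geometry'') into exact equality. This requires combinatorial surgery around joints, together with algebraic rigidity of frame matrix representations of Dowling geometries over $\bC$, to rule out proper extensions. The low-rank base cases, where the growth-rate machinery degenerates, will need separate ad hoc analysis using the geometry of lines and planes in $\bC$-representable matroids.
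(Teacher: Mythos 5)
This statement is not proved in the paper at all: it is quoted as background (Theorem 1.0.5 of \cite{Geelen-Nelson-Walsh-2024}), and its actual proof occupies essentially an entire memoir of Geelen, Nelson, and Walsh. So there is no in-paper argument to compare against; what can be assessed is whether your outline would constitute a proof, and it does not. Your architecture matches the known strategy at the highest level (lower bound from $\DG(r,\mathbb{Z}/\ell\mathbb{Z})$; Growth Rate Theorem to rule out exponential growth via non-representability of projective geometries in characteristic zero; a structure theorem placing dense $U_{2,\ell+2}$-minor-free matroids near Dowling geometries over a finite group $H$; the observation that finite subgroups of $\bC^\times$ are cyclic of order at most $\ell$). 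But the two steps you defer --- the ``bounded perturbation'' structure theorem and the ``stability step'' converting approximate Dowling structure into the exact bound and the exact extremal characterization --- are not minor technicalities to be filled in: they are the entire content of the cited work, involving connectivity reductions, frame-matroid templates, and delicate counting to eliminate the perturbation. As written, the proposal assumes the conclusion of a theorem roughly as hard as the one being proved.

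Two smaller points. First, your justification that $\DG(r,\mathbb{Z}/\ell\mathbb{Z})$ has no $U_{2,\ell+2}$-minor (``every line has at most $\ell+1$ points'') is insufficient on its face, since contraction can create long lines; you need the additional fact that every simple minor of a Dowling geometry over $H$ is a restriction of a smaller Dowling geometry over $H$, so that no rank-$2$ minor ever exceeds $\ell+1$ points. Second, in the extremal induction you contract ``a joint of the identified Dowling-geometry substructure,'' but for a general extremal $M$ the existence of such a joint is exactly what the structure theorem must supply; the induction cannot get started without it, so this step is circular as presented.
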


When $\bF \in \{\bR, \mathbb Q\}$, there is a lower bound of $2\binom{r}{2} + r$ and an upper bound of $2 \binom{r}{2} + f(\ell)\cdot r$ with $f$ a function of $\ell$, due to Geelen, Nelson, and Walsh \cite[Theorem 1.6]{Geelen-Nelson-Walsh-2024}, but no exact solution to Problem~\ref{prob: Kung problem} is known for all $\ell$. 
Theorem \ref{thm: main} can be seen as a further specialization of Theorems \ref{thm: exlude a line} and \ref{thm: complex exclude a line} to the subclass of $\bR$-representable matroids consisting of positroids.

Theorem \ref{thm: main} also continues a recent line of research that studies positroids from a structural perspective.
For example, Bonin \cite{Bonin-2024} studied matroid amalgams of positroids, Quail \cite{Quail-2024} studied positroids representable over the finite fields of order $3$ and of order $4$, Quail and Rombach \cite{Quail-Rombach-2025} studied graphic positroids, and Park \cite{Park-2023} studied excluded minors for positroids.
Our proof of Theorem \ref{thm: main} relies on two new structural properties (Propositions \ref{prop: rank 3 excluded minor} and \ref{prop: rank 4 excluded minor}) for low-rank positroids.

Finally, \Cref{thm: main}, along with Theorem \ref{thm: main other classes}, contributes to a line of research studying matroids with no $M(K_4)$-minor, where $M(K_4)$ is the graphic matroid of the complete graph $K_4$. 
For example, Sims \cite{Sims-1977} proved that the class of $M(K_4)$-minor-free matroids is complete, Brylawski \cite{Brylawski-1971} characterized the binary matroids with no $M(K_4)$-minor, and Oxley \cite{Oxley-1987} characterized the $3$-connected ternary matroids with no $M(K_4)$-minor.
More recently, work of Pendavingh and van der Pol \cite{Pendavingh-vanderPol-2015} and van der Pol \cite{vanderPol-2023} investigated the importance of $M(K_4)$ in matroid enumeration.
Most notably for this paper, Kung studied Problem \ref{prob: Kung problem} for the class of matroids with no $M(K_4)$-minor and no $U_{2, \ell+2}$-minor, proving that every such simple rank-$r$ matroid has at most $(6\ell^{\ell - 1} + 8\ell)\cdot r$ elements \cite[Theorem 1.1]{Kung-1988}. The graphic matroid $M(K_4)$ is not a positroid, so this also bounds from above the maximum number of elements of a simple rank-$r$ positroid with no $U_{2, \ell+2}$-minor. 
To the best of our knowledge, this was in fact the previous best upper bound for this quantity.
There are several other notable $M(K_4)$-minor-free classes of matroids that contain the class of positroids, such as gammoids, strongly base-orderable matroids, and $k$-base-orderable matroids.
We will discuss these further in Section \ref{sec: future work}.

The rest of this paper is structured as follows.
In Section \ref{sec: preliminaries}, we will give more background on matroids and positroids.
In Section \ref{sec: excluded minors}, we will use a list of $9$ known excluded minors for the class of positroids to prove two structural properties for low-rank positroids.
In Section \ref{sec: the extremal examples}, we will describe the matroids for which equality holds in Theorem \ref{thm: main} and prove that they are in fact positroids and are $U_{2, \ell+2}$-minor-free.
In Section \ref{sec: the main proof} we will prove Theorem \ref{thm: main} as a consequence of Theorem \ref{thm: main result with proof}. Then in Section \ref{sec:minor-closed-classes}, we will comment on specializations of Theorem \ref{thm: main result with proof}  to various minor-closed classes of matroids that are contained in the positroids or in the transversal matroids, and prove Theorem \ref{thm: main other classes}. We conclude in Section \ref{sec: future work} by discussing several directions for future work.

\section{Preliminaries} \label{sec: preliminaries}

\subsection{Matroids}

In this section, we recall relevant concepts from matroid theory. We assume basic knowledge of matroids and refer the reader to \cite{Oxley-2011} for more details. We focus on the perspective of \emph{flats}, which will be particularly useful for our work.

\begin{definition}[Matroids in terms of flats]
Let $E$ be a finite set. A \emph{matroid} is a pair $M = (E, \cF)$ where $\mathcal{F} \subseteq 2^E$ satisfies the following axioms:
\begin{enumerate}
    \item[(F1)] $E \in \mathcal{F}$.
    \item[(F2)] For all $F_1, F_2 \in \mathcal{F}$, we have $F_1 \cap F_2 \in \mathcal{F}$.
    \item[(F3)] For each $F \in \mathcal{F}$ and each $e \in E - F$, there exists a unique $F' \in \mathcal{F}$ that is minimal with respect to inclusion among elements of $\mathcal{F}$ containing $F \cup \{e\}$.
\end{enumerate}
The set $\cF$ is the set of \emph{flats} of $M$. 
\end{definition}

\begin{remark}
Flats are highly structured: ordered by inclusion, they form a lattice called the \emph{matroid lattice}. Given a collection of flats $\mathcal{F} \subseteq 2^E$ satisfying the axioms above, one can recover other familiar data of the matroid $M = (E, \cF)$:
\begin{itemize}
    \item The \emph{rank} of the matroid, denoted $r(M)$, is the length of a longest chain of flats in the lattice.
    \item The \emph{bases} are subsets $B \subseteq E$ of size $r(M)$ which are not contained in any flat other than $E$ itself, and the \emph{independent sets} are the subsets of bases.
     \item The \emph{closure} of a set $X$, denoted $\cl(X)$, is the join of all flats contained in $X$. Explicitly, this is the minimal flat containing $X$.
\end{itemize}
\end{remark}

The flats of a matroid $M = (E, \cF)$ of rank $1$, $2$, and $3$ are the \emph{points}, \emph{lines}, and \emph{planes} of $M$, respectively.
We call the lines that contain at least three points \textit{long lines}; these will play a distinguished role in what follows. 
We write $|M|$ for $|E|$ and $\elem(M)$ for the number of points of $M$.

\begin{example}
    An important matroid in this paper is the rank-$2$ uniform matroid on $\ell$ elements, denoted $U_{2,\ell}$. This is the matroid where the flats are the empty set, singletons, and the entire ground set. 
    The corresponding lattice is illustrated in \Cref{fig:uniform matroid lattice}. The bases of this matroid are thus all the 2-element subsets of the ground set. 
\end{example}

\begin{figure}[h]
\centering
\begin{tikzpicture}[every node/.style={draw, minimum size=10mm, inner sep=0.1mm}, scale=0.85]

\node (empty) at (0,0) {$\emptyset$};
\node (1) at (-3.5,2) {$\{1\}$};
\node (2) at (-1.5,2) {$\{2\}$};
\node (n-1) at (1.5,2) {$\{\ell-1\}$};
\node (n) at (3.5,2) {$\{\ell\}$};
\node[draw=none] (dots) at (0,2) {$\cdots$};
\node (all) at (0,4) {$\{1,\dots,\ell\}$};

\draw (empty) -- (1);
\draw (empty) -- (2);
\draw (empty) -- (n-1);
\draw (empty) -- (n);

\draw (1) -- (all);
\draw (2) -- (all);
\draw (n-1) -- (all);
\draw (n) -- (all);

\end{tikzpicture}
\caption{Lattice of flats of the uniform matroid $U_{2,\ell}$.}
\label{fig:uniform matroid lattice}
\end{figure}

Perhaps the canonical examples of matroids come from matrices.
If $A$ is a matrix over a field $\bF$ with column set $E$, then the set $\cF$ of maximal subsets of columns spanning each subspace of the column span of $A$ is the set of flats of a matroid on $E$.
Such a matroid is \emph{representable} over $\bF$.
In this case the bases of the matroid are the subsets of columns that are bases of the column span, and the rank of the matroid is the rank of the matrix.

Matroids can also be defined using affine independence over a field.

\begin{definition}[Simple affine matroids]
If $E \subseteq \bF^{r}$, then the set of affinely independent subsets of $E$ is the collection of independent sets of a simple matroid $M$ on $E$. 
Such a matroid is \emph{affine} over $\bF$. 
The lines and planes of $M$ can be understood geometrically as follows:
\begin{itemize}
    \item If $r(M) \ge 2$, the lines of $M$ are the maximal subsets of $E$ which lie on a line of $\mathbb{F}^r$.

\item If $r(M) \ge 3$, the planes of $M$ are the maximal subsets of $E$ which lie on a plane of $\mathbb{F}^r$.
    
\end{itemize}
\end{definition}

In practice, when $\mathbb{F}=\mathbb{R}$, we represent $M$ by drawing $E$ in $\mathbb{R}^r$, together with the lines and planes of $\mathbb{R}^r$ containing the lines and planes of $M$. We often do not draw lines through exactly two points, nor planes through the union of a (not necessarily long) line and a point, because the existence of such lines and planes is implied by construction.

\begin{example}
Consider the $9$-element subset of $\bR^3$ shown in \Cref{fig: rank-4 excluded minor}, and let $M$ be the corresponding affine matroid.
We have explicitly drawn the four long lines of $M$.
There are also many $2$-point lines of $M$ not explicitly drawn that connect pairs of points not on a long line.  
These are all the lines of $M$. 
We have explicitly drawn the three planes of $M$ that are not the union of a line and a point.
There are also many planes of $M$ not explicitly drawn, which each consists of the union of a line and a point. 
\end{example}

We will need two matroid operations, \emph{restriction} and \emph{contraction}, in this paper, each of which we can define in terms of the lattice of flats of the matroid.
Let $M$ be a matroid on a ground set $E$ with flat set $\mathcal{F}$, and let $A \subseteq E$. The \emph{restriction} of $M$ to $A$, denoted $M|A$, is the matroid on $A$ whose flats are exactly the intersections $F \cap A$ for $F \in \mathcal{F}$. That is,
\[
\mathcal{F}(M|A) = \{ F \cap A \mid F \in \mathcal{F} \}.
\]
If $A=F'$ happens to be a flat, then we have that \[\mathcal{F}(M|F')=\{F \mid F\in \mathcal{F}, \,F\subseteq F'\}.\]
The \emph{rank} of $A$ in $M$ is the rank of the matroid $M|A$.
Intuitively, restriction “removes” elements not in $A$ while preserving the lattice structure among elements of $A$.
We will sometimes also write $M \del (E - A)$ for $M|A$; this is the \emph{deletion} of $E - A$ from $M$. Dually, the \emph{contraction} of $M$ by $F'\in\mathcal{F}$, denoted $M/F'$, is the matroid on $E - F'$ whose flats are the flats of $M$ containing $F'$, with $F'$ removed:
\[
\mathcal{F}(M/F') = \{ F - F' \mid F \in \mathcal{F},\, F' \subseteq F \}.
\]
Contraction can be thought of as “collapsing” $F'$ to the empty set, retaining the lattice structure among elements outside of $F'$.

Both operations interact nicely with affine representations: restriction to $A$ corresponds to deleting points not in $A$, while contraction of a flat $F'$ corresponds to projecting the matroid along the subspace spanned by $F'$, preserving incidences among the remaining points, lines, and planes. These perspectives will allow us to manipulate matroids in a way that is compatible with both their flats and their geometric structures.

This paper studies classes of matroids that are closed under deletion and contraction.

\begin{definition}[Minors]\label{Def:matroid minor}
Let $M$ be a matroid. A \emph{minor} of $M$ is any matroid obtained from $M$ by a sequence of deletions and contractions.
A class $\mathcal{C}$ of matroids is \emph{minor-closed} if, whenever $M \in \mathcal{C}$ and $N$ is isomorphic to a minor of $M$, we have $N \in \mathcal{C}$. Equivalently, $\mathcal{C}$ is closed under restriction, contraction, and isomorphism.
\end{definition}

A \emph{separator} of a matroid $M = (E, \cF)$ is a set $X \subseteq E$  so that $r(X) + r(E - X) = r(M)$.
A matroid is \emph{connected} if it has no separators other than $\emptyset$ and $E$.
The \emph{components} of $M$ are its maximal connected restrictions.

Consider the matroid on $E=[n]$ with flats $\emptyset$, \{1\}, $[n] - \{1\}$, and $[n]$.
The bases of this matroid are $\{\{1,i\}\mid 2\leq i\leq n\}$. 
This is a matroid on $n$ elements with no $U_{2,3}$-minor. 
In order to make Problem \ref{prob: Kung problem} interesting, we must avoid pathological examples like this one. 
We do so by restricting our attention to simple matroids.

\begin{definition}[Simple matroids]\label{def:SimpleMatroid}
A matroid $M$ is \emph{simple} if every element is a flat.
Equivalently, $M$ is simple if each subset with at most two elements is independent.
\end{definition}

A dependent element of a matroid is a \emph{loop}, and a dependent pair of non-loops is a \emph{parallel pair}.

We will need the fact that every matroid has a corresponding simple matroid.

\begin{definition}[Simplification]\label{def:Simplification}
Let $M$ be a matroid on ground set $E$.  
The \emph{simplification} of $M$, denoted $\operatorname{si}(M)$, is the simple
matroid obtained by deleting all loops of $M$ and identifying each parallel
class of $M$ to a single representative element.  
Equivalently, $\operatorname{si}(M)$ is the unique simple
matroid whose lattice of flats is obtained from the lattice of flats of $M$ by deleting all elements of the rank-$0$ flat of $M$ and applying the natural projection that collapses each rank-$1$ flat of $M$ to a single element.
\end{definition}

Note that the elements of $\si(M)$ are in bijection with the points of $M$, so $|\si(M)| = \elem(M)$.

\subsection{Positroids}

Positroids are a special class of matroids that arise naturally in the study of the totally nonnegative Grassmannian and have remarkable combinatorial structure. They are $\bR$-representable matroids whose representations satisfy additional positivity constraints.

\begin{definition}[Positroids]
        A matroid is a \emph{positroid} if it can be represented by a full-rank real matrix such that all maximal subdeterminants of the matrix are nonnegative.
\end{definition}

\begin{example}
    The matroid on the ground set $\{1,2,3,4\}$ with bases $\{1,2\}$, $\{1,3\}$, $\{1,4\}$, $\{2,3\}$, and $\{2,4\}$ is a positroid, represented by the matrix $\begin{pmatrix}
        1 & 1 & 1 &1 \\
        0 & 1 & 2 &2
    \end{pmatrix}.$ 
    
    For a non-example, consider the matroid in the middle left of \Cref{fig: rank-3 excluded minors}. This is the graphic matroid of the complete graph $K_4$, denoted $M(K_4)$. Indeed, one can verify that any $\mathbb{R}$-representation of $M(K_4)$ has at least one negative maximal subdeterminant.
\end{example}


In line with our earlier introduction to matroid flats, we give a description of positroids in terms of their flats, due to Bonin \cite[Theorem 1.2]{Bonin-2024}.

\begin{theorem}[Bonin \cite{Bonin-2024}] \label{thm:Bonin's condition}
    A connected matroid $M$ on a ground set $E$ is a positroid if and only if there is a total ordering $<$ of $E$ such that for every flat $F$ for which both $M|F$ and $M/F$ are connected, either $F$ or $E-F$ is an interval in $<$ order. 
\end{theorem}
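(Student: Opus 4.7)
The plan is to prove both directions by leveraging the classical description of positroids via Grassmann necklaces (Postnikov) and the non-crossing partition structure of Ardila--Rinc\'on--Williams. The forward and backward implications require substantially different arguments.

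For the forward implication, I would fix a totally nonnegative representation $A$ of $M$ with columns ordered $1 < 2 < \cdots < n$, and take this linear order as the ordering $<$. Let $F$ be a flat with $M|F$ and $M/F$ both connected. Suppose for contradiction that neither $F$ nor $E - F$ is an interval; then there exist indices $a < b < c < d$ with $a,c$ in one of $F, E-F$ and $b,d$ in the other. The strategy is to use this ``cyclic crossing'' together with the fact that a positroid has non-crossing partitions of connected components after every cyclic rotation, to construct an explicit separator of either $M|F$ or $M/F$. Concretely, the crossing pattern forces certain columns of $A$ (after restriction to $F$, or after projecting along $F$) to lie in complementary subspaces, because any common basis element would violate a $3 \times 3$ total-positivity inequality of the Pl\"ucker type. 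This disconnects one of the two sides, contradicting the hypothesis.

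For the backward implication, I would proceed by induction on $|E|$. The base cases (rank $\le 1$, or $M$ uniform) are immediate from explicit representations over $\bR$. For the inductive step, pick a proper nonempty flat $F$ with $M|F$ connected and $M/F$ connected, and $F$ minimal with respect to inclusion among such flats (if no such $F$ exists, $M$ is easily handled directly). By hypothesis $F$ or $E-F$ is an interval; without loss of generality say $F$ is an interval. Then $M|F$ and $M/F$ each inherit an induced total ordering satisfying the theorem's condition, so by induction both are positroids. The goal is to splice their totally nonnegative representations into one for $M$: the interval condition on $F$ means the ``new'' columns appear in a contiguous block, so one can arrange the block decomposition along the ordering and choose scalings so that mixed maximal minors factor cleanly as products of block minors, maintaining nonnegativity.

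The main obstacle is the gluing step in the backward direction: block-positive matrices are not automatically totally nonnegative, and the naive construction produces sign cancellations on mixed minors that involve columns from both blocks. One must use the interval hypothesis on \emph{every} separating flat, not just the one being peeled off, to control these cross-terms inductively. A more elegant route, which I would try to pursue in parallel, is to show that the ordering hypothesis directly implies the Ardila--Rinc\'on--Williams characterization: that for each cyclic rotation of $<$, the connected components of $M$ form a non-crossing partition. This would let us invoke a known equivalence to conclude that $M$ is a positroid, bypassing explicit matrix construction entirely and localizing all the work to a combinatorial statement about connected flats and cyclic intervals.
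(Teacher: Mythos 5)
This theorem is not proved in the paper at all: it is quoted verbatim from Bonin \cite{Bonin-2024} (his Theorem 1.2) and used as a black box, so there is no in-paper argument to compare yours against. Judged on its own terms, your write-up is a plan rather than a proof, and both directions have real gaps. In the forward direction, the decisive step is the assertion that a crossing pattern $a<b<c<d$ with $a,c\in F$ and $b,d\in E-F$ ``forces certain columns to lie in complementary subspaces, because any common basis element would violate a $3\times 3$ total-positivity inequality of the Pl\"ucker type.'' That is not an argument: you do not say which minors of which matrix are involved, why the relevant Pl\"ucker coordinates have determined signs, or where the hypothesis that \emph{both} $M|F$ and $M/F$ are connected enters (it must, since the conclusion is false without it). Moreover, the Ardila--Rinc\'on--Williams non-crossing theorem you invoke concerns the connected components of the positroid itself, and $M$ is connected by hypothesis, so applied directly it says nothing; to use it you would first have to produce a minor of $M$ whose component structure encodes the partition $\{F, E-F\}$, and constructing that minor is essentially the entire content of this direction.

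In the backward direction you candidly identify the obstruction yourself: splicing totally nonnegative representations of $M|F$ and $M/F$ along a contiguous block does not obviously preserve nonnegativity of mixed minors, and you do not resolve this. Your proposed fallback --- verifying that the connected components of $M$ form a non-crossing partition under each cyclic rotation --- cannot rescue the argument: for a connected matroid the component partition is the trivial one-block partition, so the condition is vacuously satisfied, and in any case non-crossing components is only a necessary condition that reduces the problem to connected positroids, which is exactly the case you are in. There are also smaller loose ends (the claim that the case where no proper flat $F$ has both $M|F$ and $M/F$ connected is ``easily handled directly'' is asserted without proof). The overall strategy of routing the proof through Grassmann necklaces and the Ardila--Rinc\'on--Williams machinery is reasonable and is in the spirit of how such characterizations are actually established, but as written neither implication is closed; if you want a complete argument you should consult Bonin's proof directly rather than reconstruct it.
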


\begin{example}\label{ex:uniform}
    The only non-empty flats of $U_{2,\ell}$ are singletons and the entire ground set, which satisfy the conditions of \Cref{thm:Bonin's condition} for any ordering. Thus, $U_{2,\ell}$ is a positroid.
\end{example}

\section{Excluded minors} \label{sec: excluded minors}

Minor-closed classes of matroids are important in matroid theory, as many natural classes of matroids are minor-closed. For instance, graphic matroids, cographic matroids, regular matroids, and matroids representable over a field $\mathbb{F}$ are all minor-closed classes.

\begin{figure}[htb] 
\[
 \begin{tikzpicture}[scale=0.3]
 
  \filldraw (-4,0) circle (10pt);
  \filldraw (0,0) circle (10pt);
  \filldraw (4,0) circle (10pt);
  
  \filldraw (-2,4) circle (10pt);
  \filldraw (0,4) circle (10pt);
  \filldraw (4,4) circle (10pt);

  \filldraw (0,8) circle (10pt);

  \draw[thick] (-4,0) -- (4,0);
  \draw[thick] (-4,0) -- (0,8);
  \draw[thick] (0,0) -- (0,8);
 \end{tikzpicture}
 \endpgfgraphicnamed
 \]
 \caption{A rank-$3$ excluded minor for the class of positroids.} 
 \label{fig: a rank-3 excluded minor}
\end{figure}

In this section we will use some known results for \emph{excluded minors} for the class of positroids, which are minor-minimal matroids that are not positroids.
Since positroids are precisely the base-sortable matroids \cite{positroid-is-base-orderable}, the rank-$3$ excluded minors for the class of positroids have been completely classified by Blum \cite[Corollary 4.12]{Blum-2001}.
While there are infinitely many rank-$3$ excluded minors, we use the subset shown in Figures \ref{fig: a rank-3 excluded minor} and \ref{fig: rank-3 excluded minors} to prove that rank-$3$ matroids satisfying a certain structural property are not positroids.

\begin{proposition}\label{prop: rank 3 excluded minor}
    If $M$ is a simple rank-$3$ matroid with a line $L$ and distinct points $e_1, e_2, e_3$ on $L$ each on at least two long lines, then $M$ is not a positroid.
    In particular, $M$ has a minor from Figure \ref{fig: rank-3 excluded minors}.
\end{proposition}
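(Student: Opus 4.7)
The plan is to produce a minor of $M$ isomorphic to one of the matroids listed in Figure~\ref{fig: rank-3 excluded minors}. Since the matroids in Figure~\ref{fig: rank-3 excluded minors} are excluded minors for the class of positroids by Blum's classification, this will immediately show that $M$ is not a positroid; the ``in particular'' clause of the statement already packages both conclusions together.

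For each $i \in \{1,2,3\}$, I would first fix a long line $L_i \ne L$ through $e_i$; such a line exists by hypothesis. Because $M$ is simple and has rank $3$, two distinct lines meet in at most one point, so $L_i \cap L = \{e_i\}$ and $L_1, L_2, L_3$ are pairwise distinct. Choose a point $f_i \in L_i \setminus \{e_i\}$ for each $i$; then $f_i \notin L$ and $f_i \ne e_j$ for any $j$. The natural candidate minor is the restriction $N = M \,|\, X$, where $X$ is the set of distinct points among $\{e_1,e_2,e_3,f_1,f_2,f_3\}$. The long lines of $N$ are determined by $L \cap X \supseteq \{e_1,e_2,e_3\}$ and, for each $i$, by $L_i \cap X$, which contains $\{e_i,f_i\}$ and possibly some $f_j$ with $j \ne i$ if the $L_j$'s happen to share extra points.

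The combinatorial skeleton of $N$ is then controlled by how the three auxiliary lines meet outside $L$, and I would split into three cases: (i) all of $L_1, L_2, L_3$ pass through a common external point, in which case $f_1 = f_2 = f_3$ can be arranged and $|X| \le 4$; (ii) exactly two of the $L_i$'s share an external point, so $|X| \le 5$; and (iii) the $L_i$'s are pairwise disjoint outside $L$, so $|X| = 6$. In each case the forced long lines, together with the rank-$3$ constraint, pin down the isomorphism type of $N$ (possibly after one additional deletion or contraction to simplify) as one of the matroids drawn in Figure~\ref{fig: rank-3 excluded minors}.

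The main obstacle is case (iii): the six points $e_1,e_2,e_3,f_1,f_2,f_3$ could acquire accidental extra collinearities that I do not control, which might change the isomorphism type of $N$. I would handle such an extra long line in one of two ways. Either the resulting configuration already appears in Figure~\ref{fig: rank-3 excluded minors} (for instance, an additional long line through some $f_i$, $f_j$, $e_k$ produces a $K_4$-type configuration), or I delete one of the offending $f_i$'s; the hypothesis that each $e_i$ is on \emph{two} long lines guarantees that after this deletion each $e_i$ still lies on $L$ and on a second long line realized inside the restriction, so the resulting restriction still has the structure forcing a minor from the figure. Because the number of possible line-incidence patterns on at most six points in rank~$3$ is finite and small, this reduces to a direct check against the list in Figure~\ref{fig: rank-3 excluded minors}.
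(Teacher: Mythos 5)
There is a genuine gap, and it occurs at the very first step of your construction. You restrict to $X=\{e_1,e_2,e_3,f_1,f_2,f_3\}$ with only \emph{one} point $f_i$ chosen from each $L_i\setminus\{e_i\}$. In the restriction $N=M|X$ the set $L_i\cap X$ then has only two points (unless some $f_j$ happens to lie on $L_i$), so it is not a long line of $N$ at all: the hypothesis ``each $e_i$ lies on a second long line'' is destroyed by your own restriction. Concretely, in your case (iii) the matroid $N$ is a six-point rank-$3$ matroid whose only long line is $\{e_1,e_2,e_3\}$; this contains none of the matroids of Figure~\ref{fig: rank-3 excluded minors} as a minor (each of those has at least four long lines), and in your case (i) you get a four-point matroid, which is even further from the targets. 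To make the skeleton work you must retain at least two points of each $L_i$ off $L$, giving a configuration on up to nine points. This is exactly what the paper's proof arranges by taking $M$ deletion-minimal with the stated property, which forces $|L|=3$ and $|L_i|=3$, and the minimality is then used repeatedly to rule out or control extra long lines (for instance, a point of $L_i-L$ lying on three long lines contradicts minimality after deleting a third point of $L_i$).

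Two further problems would remain even after fixing the point count. First, your case split (common external point / exactly two lines share a point / pairwise disjoint) is not exhaustive: it omits the configuration where the three lines intersect pairwise in three distinct points (which yields $M_4\cong M(K_4)$) and the configurations where exactly two of the three pairs intersect (which yield $M_7$ and, after more work, $M_6$ or $M_8$). Second, the ``accidental extra collinearities'' you defer are where most of the actual work lies; the paper handles them not by ad hoc deletion of an $f_i$ (which can again kill a needed long line) but by exchanging an offending line for $L_i$ and invoking deletion-minimality. As written, your argument establishes the conclusion in none of the cases.
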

\begin{proof}
Let $M_1, M_2, \dots, M_9$ be the matroids of Figure \ref{fig: rank-3 excluded minors}, reading from left to right and top to bottom.
Let $M$ be deletion-minimal with a long line $L$ and distinct points $e_1,e_2,e_3$ on $L$ so that for $i = 1,2,3$ the point $e_i$ is on a long line $L_i$ other than $L$.
Then $L = \{e_1, e_2, e_3\}$ and $|L_i| = 3$ for $i = 1,2,3$, so $|M| \le 9$.

First suppose that $L_1, L_2, L_3$ are pairwise disjoint.
If $M$ has only four long lines, then $M \cong M_1$.
If $M$ has a fifth long line $L'$ so that $L' \cap L = \{e_i\}$ for some $i \in \{1,2,3\}$, by replacing $L_i$ with $L'$ and deleting $L_i-\{e_i\}$, we see that $M$ is not deletion-minimal, a contradiction.
So if $M$ has exactly five long lines then $M \cong M_2$.
Suppose that, for some $i\in\{1,2,3\}$, there exists $e \in L_i - L$ lying on at least three long lines, that is, on $L_i$ and at least two other long lines $L_4$ and $L_5$.
Since $L_4$ is distinct from $L_j$ for $j\in \{1,2,3\}$, it must intersect each such line in at most 1 point and consequently in exactly one point. 
Let $f \in L_i - \{e_i, e\}$. 
Then in $M \del f$, each point on $L_4$ is on a long line other than $L_4$: $e$ is on $L_5$ and the other two points are on $L_j$ for some $j\in\{1,2,3\} - \{i\}$.
Therefore $M$ is not deletion-minimal, a contradiction.
Thus, the long lines of $M$ other than $L_1, L_2, L_3$ are pairwise disjoint, so $M$ has at most six long lines, and if $M$ has six long lines then $M \cong M_3$.

Next suppose that each pair from $\{L_1, L_2, L_3\}$ shares a common point.
If these three lines do not share a common point, then $M \cong M_4$.
So they share a common point $e$.
If $M$ has only four long lines then $M \cong M_5$.
If $M$ has a fifth long line $L'$ with $L' \cap L = \{e_i\}$ for some $i \in \{1,2,3\}$, then by replacing $L_i$ with $L'$ and deleting $L_i-\{e_i,e\}$, we contradict the minimality of $M$.
It follows that $M$ has at most five long lines, and if $M$ has a fifth long line then $M \cong M_6$.

Finally, suppose that $L_i$ and $L_j$ intersect and $L_k$ does not intersect $L_i$, where $i,j,k \in \{1,2,3\}$ are distinct; we may assume that $(i,j,k) = (1,2,3)$.
Suppose $L_3$ intersects $L_2$.
If $M$ has only four long lines then $M \cong M_7$.
If $M$ has a fifth long line $L'$, then $L'$ consists of $e_2$, the unique point of $L_1 - (L \cup L_2)$, and the unique point of $L_3 - (L \cup L_2)$, so $M$ has only five long lines and we see that $M \cong M_6$.
So we may assume that $L_3$ is disjoint from $L_1 \cup L_2$.
If $M$ has only four long lines then $M \cong M_8$, so $M$ has a fifth line $L'$.
If $L' \cap L = \{e_i\}$ for $i \in \{1,2,3\}$, then by replacing $L_i$ with $L'$ we contradict the minimality of $M$.
So $L' \cap L = \emptyset$, and it follows that $M$ has exactly five lines, and $M \cong M_9$.
\end{proof}

\begin{figure}[htb] 
\[
\renewcommand{\arraystretch}{1.2}
\setlength{\tabcolsep}{12pt}
\begin{tabular}{c c c}

 \begin{tikzpicture}[scale=0.3]
 
  \filldraw (-4,0) circle (10pt);
  \filldraw (0,0) circle (10pt);
  \filldraw (4,0) circle (10pt);
  
  \filldraw (-4,4) circle (10pt);
  \filldraw (0,4) circle (10pt);
  \filldraw (4,4) circle (10pt);

  \filldraw (-4,8) circle (10pt);
  \filldraw (0,8) circle (10pt);
  \filldraw (4,8) circle (10pt);

  \draw[thick] (-4,0) -- (4,0);
  \draw[thick] (-4,0) -- (-4,8);
  \draw[thick] (0,0) -- (0,8);
  \draw[thick] (4,0) -- (4,8);
  
 \end{tikzpicture}
 
 & 
 
  \begin{tikzpicture}[scale=0.3]
 
  \filldraw (-4,0) circle (10pt);
  \filldraw (0,0) circle (10pt);
  \filldraw (4,0) circle (10pt);
  
  \filldraw (-4,4) circle (10pt);
  \filldraw (0,4) circle (10pt);
  \filldraw (4,4) circle (10pt);

  \filldraw (-4,8) circle (10pt);
  \filldraw (0,8) circle (10pt);
  \filldraw (4,8) circle (10pt);

  \draw[thick] (-4,0) -- (4,0);
  \draw[thick] (-4,0) -- (-4,8);
  \draw[thick] (0,0) -- (0,8);
  \draw[thick] (4,0) -- (4,8);
  \draw[thick] (-4,4) -- (4,4);
  
 \end{tikzpicture}
 
 & 
 
  \begin{tikzpicture}[scale=0.3]
 
  \filldraw (-4,0) circle (10pt);
  \filldraw (0,0) circle (10pt);
  \filldraw (4,0) circle (10pt);
  
  \filldraw (-4,4) circle (10pt);
  \filldraw (0,4) circle (10pt);
  \filldraw (4,4) circle (10pt);

  \filldraw (-4,8) circle (10pt);
  \filldraw (0,8) circle (10pt);
  \filldraw (4,8) circle (10pt);

  \draw[thick] (-4,0) -- (4,0);
  \draw[thick] (-4,0) -- (-4,8);
  \draw[thick] (0,0) -- (0,8);
  \draw[thick] (4,0) -- (4,8);
  \draw[thick] (-4,4) -- (4,4);
  \draw[thick] (-4,8) -- (4,8);
  
 \end{tikzpicture}

  \\
$M_1$ & $M_2$ & $M_3$
 \\
 \\

 \begin{tikzpicture}[scale=0.3]
 
  \filldraw (-4,0) circle (10pt);
  \filldraw (0,0) circle (10pt);
  \filldraw (4,0) circle (10pt);
  
  \filldraw (-2,4) circle (10pt);
  \filldraw (0,3-1/3) circle (10pt);

  \filldraw (0,8) circle (10pt);

  \draw[thick] (-4,0) -- (4,0);
  \draw[thick] (-4,0) -- (0,8);
  \draw[thick] (0,0) -- (0,8);
  \draw[thick] (4,0) -- (-2,4);
 \end{tikzpicture}

 &

  \begin{tikzpicture}[scale=0.3]
 
  \filldraw (-4,0) circle (10pt);
  \filldraw (0,0) circle (10pt);
  \filldraw (4,0) circle (10pt);
  
  \filldraw (-2,4) circle (10pt);
  \filldraw (0,4) circle (10pt);
  \filldraw (2,4) circle (10pt);

  \filldraw (0,8) circle (10pt);

  \draw[thick] (-4,0) -- (4,0);
  \draw[thick] (-4,0) -- (0,8);
  \draw[thick] (0,0) -- (0,8);
  \draw[thick] (4,0) -- (0,8);
 \end{tikzpicture}
 
&

 \begin{tikzpicture}[scale=0.3]
 
  \filldraw (-4,0) circle (10pt);
  \filldraw (0,0) circle (10pt);
  \filldraw (4,0) circle (10pt);
  
  \filldraw (-2,4) circle (10pt);
  \filldraw (0,4) circle (10pt);
  \filldraw (2,4) circle (10pt);

  \filldraw (0,8) circle (10pt);

  \draw[thick] (-4,0) -- (4,0);
  \draw[thick] (-4,0) -- (0,8);
  \draw[thick] (0,0) -- (0,8);
  \draw[thick] (4,0) -- (0,8);
  \draw[thick] (-2,4) -- (2,4);
 \end{tikzpicture}

  \\
$M_4 \cong M(K_4)$ & $M_5$ & $M_6 \cong P_7$
 \\
 \\

  \begin{tikzpicture}[scale=0.3]
 
  \filldraw (-4,0) circle (10pt);
  \filldraw (0,0) circle (10pt);
  \filldraw (4,0) circle (10pt);
  
  \filldraw (-2,4) circle (10pt);
  \filldraw (0,4) circle (10pt);
  \filldraw (2,2) circle (10pt);

  \filldraw (0,8) circle (10pt);

  \draw[thick] (-4,0) -- (4,0);
  \draw[thick] (-4,0) -- (0,8);
  \draw[thick] (0,0) -- (0,8);
  \draw[thick] (4,0) -- (0,4);
 \end{tikzpicture}

 &

 \begin{tikzpicture}[scale=0.3]
 
  \filldraw (-4,0) circle (10pt);
  \filldraw (0,0) circle (10pt);
  \filldraw (4,0) circle (10pt);
  
  \filldraw (-2,4) circle (10pt);
  \filldraw (0,4) circle (10pt);
  \filldraw (4,4) circle (10pt);

  \filldraw (0,8) circle (10pt);
  \filldraw (4,8) circle (10pt);

  \draw[thick] (-4,0) -- (4,0);
  \draw[thick] (-4,0) -- (0,8);
  \draw[thick] (0,0) -- (0,8);
  \draw[thick] (4,0) -- (4,8);
 \end{tikzpicture}

 &

  \begin{tikzpicture}[scale=0.3]
 
  \filldraw (-4,0) circle (10pt);
  \filldraw (0,0) circle (10pt);
  \filldraw (4,0) circle (10pt);
  
  \filldraw (-2,4) circle (10pt);
  \filldraw (0,4) circle (10pt);
  \filldraw (4,4) circle (10pt);

  \filldraw (0,8) circle (10pt);
  \filldraw (4,8) circle (10pt);

  \draw[thick] (-4,0) -- (4,0);
  \draw[thick] (-4,0) -- (0,8);
  \draw[thick] (0,0) -- (0,8);
  \draw[thick] (4,0) -- (4,8);
  \draw[thick] (-2,4) -- (4,4);
 \end{tikzpicture}

 \\
 $M_7$ & $M_8$ & $M_9$
 
 \end{tabular}
 \]
 \caption{Matroids $M_1$--$M_7$ are known excluded minors for the class of positroids, and $M_8$ and $M_9$ have the excluded minor from Figure \ref{fig: a rank-3 excluded minor} as a restriction.} 
 \label{fig: rank-3 excluded minors}
\end{figure}

Similarly, we can show that rank-$4$ matroids satisfying a certain structural property are not positroids.

\begin{proposition}\label{prop: rank 4 excluded minor}
    If $M$ is a simple rank-$4$ matroid with a line $L$, distinct points $e_1, e_2, e_3$ on $L$, and distinct planes $P_1$, $P_2$, $P_3$ through $L$ so that for $i = 1,2,3$ the point $e_i$ is on at least two long lines in $P_i$, then $M$ is not a positroid.
    In particular, $M$ has a minor from Figure \ref{fig: rank-3 excluded minors} or Figure \ref{fig: rank-4 excluded minor}.
\end{proposition}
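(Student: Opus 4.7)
My plan is to mirror the proof of Proposition \ref{prop: rank 3 excluded minor}: take $M$ to be deletion-minimal with the given structure, reduce to a small configuration, and then case-split. By the same deletion arguments as in the rank-$3$ case, I would first show $L = \{e_1, e_2, e_3\}$, each $L_i = \{e_i, a_i, b_i\}$ has exactly three points, and every element of $M$ lies in $L \cup L_1 \cup L_2 \cup L_3$. Since the three planes $P_i$ are distinct rank-$3$ flats all containing $L$, they intersect pairwise in $L$, so the six points $\{a_i, b_i\}$ are pairwise distinct and $|M| = 9$ with $P_i = L \cup L_i$.

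Next, I would classify any additional long line $L'$ by a rank argument. If $L' \cap L \ne \emptyset$, say $e_j \in L'$, and another point of $L'$ lies in $L_k - L$ for some $k \ne j$, then $\cl(L' \cup \{e_k\})$ contains both $L$ (from $\{e_j, e_k\}$) and $L_k$ (from that point together with $e_k$), and hence equals $P_k$; but the third point of $L'$ does not lie in $P_k$, contradicting that $\cl(L' \cup \{e_k\})$ has rank $3$. So every extra long line has the transversal form $L' = \{p_1, p_2, p_3\}$ with $p_i \in L_i - L$, and an analogous check shows each $P_i$ has no extra long lines inside it.

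If $M$ has no additional long lines, it is the $9$-element rank-$4$ matroid shown in Figure \ref{fig: rank-4 excluded minor}. I would verify it is not a positroid by Theorem \ref{thm:Bonin's condition}: each of $L, P_1, P_2, P_3$ is a connected flat with connected contraction, so in any positroid ordering each must be a cyclic interval; but once $L$ occupies three consecutive positions there are only three length-$5$ cyclic intervals containing $L$, and their pairwise overlaps make it impossible to fit three disjoint extra pairs $\{a_i, b_i\}$.

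The main obstacle will be the case where $M$ has an extra transversal long line $L' = \{p_1, p_2, p_3\}$. My plan here is to take $q$ to be the unique element of $L_1 - L$ distinct from $p_1$ and pass to $\si(M/q)$. Contracting $q$ merges $\{e_1, p_1\}$ into a single parallel class $[e_1]$ (and, one checks, introduces no other nontrivial parallelism). The plane $P_1$ then contributes a long line $\{[e_1], e_2, e_3\}$, the planes $\cl(L_i \cup \{q\})$ for $i = 2, 3$ contribute long lines $\{e_i, a_i, b_i\}$, and---crucially---the plane $\cl(L' \cup \{q\})$ contains both $L_1$ (because $q, p_1 \in L_1$) and $L'$, contributing a long line $\{[e_1], p_2, p_3\}$. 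Thus each of $[e_1], e_2, e_3$ on the first line lies on a second long line, so Proposition \ref{prop: rank 3 excluded minor} applies to $\si(M/q)$ and yields a minor from Figure \ref{fig: rank-3 excluded minors}. The delicate step will be carefully tracking the flats and parallel classes under contraction, especially verifying that $\cl(L' \cup \{q\})$ really does contain $L_1$ and that no unintended parallelisms collapse the long lines listed above.
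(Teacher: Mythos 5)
Your overall strategy matches the paper's (reduce to a $9$-element matroid on $L\cup L_1\cup L_2\cup L_3$, recognize the matroid of Figure~\ref{fig: rank-4 excluded minor} as the base case, and otherwise contract a point of $L_1-L$ to reach rank $3$ and invoke Proposition~\ref{prop: rank 3 excluded minor}), and your transversal-line case is essentially the paper's first case with the same choice of contracted point. But your case analysis is not exhaustive, and the missing case is a genuine gap. A rank-$4$ matroid on these $9$ points is not determined by its $3$-element circuits: even if the only long lines are $L, L_1, L_2, L_3$, there may be a $4$-element circuit not contained in any $P_i$ (for instance, $\{a_1,b_1,a_2,a_3\}$ can be made dependent without creating any new $3$-point line --- this is a single coplanarity condition on an affine realization and generically introduces no collinearities). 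Such an $M$ falls into neither of your cases: it is \emph{not} the matroid of Figure~\ref{fig: rank-4 excluded minor} (so your Bonin-condition verification does not apply to it), it has no extra transversal long line to contract along, and --- crucially for the ``in particular'' clause, which is what Theorem~\ref{thm: main result with proof} actually uses --- it does not contain the Figure~\ref{fig: rank-4 excluded minor} matroid as a minor, since any $9$-element minor is $M$ itself and $M$ has strictly more dependencies. Deletion-minimality gives you no leverage here because the ground set is already irreducible.

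The paper closes exactly this case separately: given a $4$-element circuit $C$ not contained in any $P_i$, it first rules out $C\subseteq P_i\cup P_j$ (such a $C$ would span all of $M$, contradicting $r(C)=3$), concludes that up to symmetry $C$ consists of both points of $L_1-L$ and one point from each of $L_2-L$ and $L_3-L$, contracts a point $g_1\in C\cap L_1$, and shows by a rank computation that $L$, $L_2$, $L_3$, and $C-g_1$ span four distinct long lines of $M/g_1$, so that Proposition~\ref{prop: rank 3 excluded minor} applies. Note also that the paper's distinctness argument in both cases is a uniform rank count (if two of the sets spanned the same line of $M/g_1$, their union together with $g_1$ would have rank $3$ yet span $M$), which is more robust than tracking individual parallel classes as you propose; you would need a comparable argument in the $4$-circuit case anyway. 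The remainder of your plan --- the reduction to $9$ elements, the classification of extra long lines as transversals, and the interval-counting verification that the Figure~\ref{fig: rank-4 excluded minor} matroid violates Theorem~\ref{thm:Bonin's condition} (which the paper instead cites from Bonin's work) --- is sound.
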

\begin{proof}
    For $i=1,2,3$, let $L_i$ denote the line in $P_i$ other than $L$ which contains $e_i$. Note that by restricting to 3 points on each $L_i$, including the point $e_i$, we may assume that $|M| = 9$ and $E(M) = L_1 \cup L_2 \cup L_3$. If the $3$-element circuits of $M$ are precisely $L, L_1, L_2, L_3$ and the $4$-element circuits are precisely the $4$-element sets contained in $P_i$ for some $i$ and not containing $L$ or $L_i$, then $M$ is the matroid of Figure \ref{fig: rank-4 excluded minor}.
    This matroid appears in work of Bonin, where it is shown to be an excluded minor for the class of positroids \cite[Figure 4]{Bonin-2024}. Thus, we may assume that $M$ has a $3$-element circuit that is not $L$ or any $L_i$, or $M$ has a $4$-element circuit that is not contained in $P_i$ for any $i$.
    We will consider these cases separately.

    First suppose that $M$ has a $3$-element circuit $L'$ that is not $L$ or any $L_i$.
    Note that $|P_i|=|L\cup L_i|=5$, so if $L' \subseteq P_i$ for some $i$, then by the pigeonhole principle, $L'$ contains two points from $L$ or $L_i$.
    Therefore $L$ or $L_i$ spans $P_i$, a contradiction.
    If $L'$ contains exactly two points from $P_i$ for some $i$, then $P_i$ spans a point outside of $P_i$, which contradicts that $P_i$ is a flat of $M$. 
    So $L'$ contains a point $f_i\in P_i -  L$ for $i=1,2,3$. We will contract by the point $g_1$ which is in $L_1$ but neither $L$ nor $L'$. 
    We claim that the sets $L, L_2, L_3, L'$ span distinct long lines of $M/g_1$.
    First, we show that these sets span lines in $M/g_1$. Note that $r_M(L \cup g_1) = 3$ or else $L = L_1$, $r_M(L_i \cup g_1) = 3$ for $i = 2,3$ or else $r_M(P_1 \cup P_i) = 3$, and $r_M(L' \cup g_1) = 3$ or else $\cl(L') = L_1$.
    Therefore each of $L, L_2, L_3, L'$ has rank $2$ in $M/g_1$.
    Now we show that no two of these sets span the same line in $M/g_1$. If they did, then their union has rank $2$ in $M/g_1$, and therefore their union together with $g_1$ has rank $3$ in $M$.
    But it is straightforward to check that in each case the pair of lines together with $g_1$ spans $M$ and so $r(M) = 3$, a contradiction.
    So $\cl_{M/g_1}(L)$ is a long line of $M/g_1$ with three distinct points each on a second long line, and it follows from Proposition \ref{prop: rank 3 excluded minor} that $M/g_1$ has a minor from Figure \ref{fig: rank-3 excluded minors}.

    Now suppose that $M$ has a $4$-element circuit $C$ that is not contained in $P_i$ for $i =1,2,3$.
    Suppose $C$ is contained in $P_i \cup P_j$. 
    Then, since $|C| = 4$ and $|P_i\cup P_j|=|L_i\cup L_j\cup \{e_k\}|=7$ we see that $C$ contains $2$ points from $L_i$ or from $L_j$ by the pigeonhole principle. Without loss of generality, we say $C$ contains two points from $L_i$. Since $C$ must not contain a long line, the remaining $2$ points of $C$ lie in $L_j\cup (L-L_i)$. Since $C$ is not contained in $P_i$, at least one of those points must lie in $L_j$ but not $L-L_i$. If the fourth point lies in $L-L_i$, then $\cl(C)$ contains $L_i$ and $L$, and consequently $L_j$. On the other hand, if the fourth point lies in $L_j$, then $\cl(C)$ contains $L_i$ and $L_j$, and consequently $L$. In either case, $C$ spans the entire matroid, a contradiction.    
    So by symmetry we may assume that $C$ consists of the two elements of $L_1 - L$, one element from $L_2 - L$, and one element from $L_3 - L$.
    Let $g_1 \in C \cap L_1$, and consider $M/g_1$.
    We claim that the sets $L, L_2, L_3, C - g_1$ span distinct long lines of $M/g_1$.
    As argued in the previous paragraph, the sets $L, L_2, L_3$ have rank $2$ in $M/g_1$.
    And $C - g_1$ has rank $2$ in $M/g_1$ because $r_M(C) = 3$.
    So if two of these sets span the same line in $M/g_1$, then their union has rank $2$ in $M/g_1$, and therefore their union together with $g_1$ has rank $3$ in $M$.
    It is straightforward to check that in each case the two sets together with $g_1$ span $M$ and so $r(M) = 3$, a contradiction.
    So $\cl_{M/g_1}(L)$ is a long line of $M/g_1$ with three distinct points each on a second long line, and it follows from Proposition \ref{prop: rank 3 excluded minor} that $M/g_1$ has a minor from Figure \ref{fig: rank-3 excluded minors}.     
\end{proof}

\begin{figure}[htb] 
\[
 \beginpgfgraphicnamed{tikz/fig49}
 \begin{tikzpicture}[x=0.08cm,y=0.07cm]

  \draw ( 0,-5) -- ( 0,55) -- (48,43) -- (48,-17) -- (26, -11.5);
  \draw ( 0,-5) -- ( 0,55) -- (-40,45) -- (-40,-15) --cycle;
  \draw ( 0,-5) -- ( 0,55) -- (26,36) -- (26,-24) --cycle;

  \draw[thick] ( 0, 10) -- ( -32,2);
  \draw[thick] ( 0, 25) -- ( 21,10);
  \draw[thick] ( 0, 10) -- ( 0,40);

  \draw[dashed, thick, color = lightgray] (0, 40) -- (26,33.5);
  \draw[thick] (26, 33.5) -- (40,30);

  \filldraw ( 0,10) circle (3pt);
  \filldraw ( 0,25) circle (3pt);
  \filldraw ( 0,40) circle (3pt);

  \filldraw (40,30) circle (3pt);
  \filldraw (32,32) circle (3pt);

  \filldraw (10.5,17.5) circle (3pt);
  \filldraw (21,10) circle (3pt);

  \filldraw (-16,6) circle (3pt);
  \filldraw (-32,2) circle (3pt);

 \end{tikzpicture}
 \endpgfgraphicnamed
 \]
 \caption{A rank-$4$ excluded minor for the class of positroids.} 
 \label{fig: rank-4 excluded minor}
\end{figure}

\section{The extremal examples} \label{sec: the extremal examples}

In this section we will define the family of positroids for which equality holds in Theorem \ref{thm: main}, and prove that they are in fact positroids and are $U_{2, \ell + 2}$-minor-free.
Given matroids $M$ and $N$ with $E(M) \cap E(N) = \{e\}$, the \emph{parallel connection} of $M$ and $N$ with basepoint $e$ is the matroid $P_e(M, N)$ on ground set $E(M) \cup E(N)$ with the following set as its set of flats: 
$$\{F \subseteq E(M) \cup E(N) \colon F \cap E(M) \textrm{ is a flat of $M$ and } F \cap E(N) \textrm{ is a flat of $N$}\}.$$

 \begin{definition} \label{def: the extremal family}
     For each integer $\ell \ge 2$, let $\cM_{2,\ell}$ be the class of matroids isomorphic to $U_{2,\ell+1}$, and then for each integer $r \ge 3$ let $\cM_{r,\ell}$ be the class of matroids of the form $P_e(M, N)$ with $M \in \cM_{r-1,\ell}$ and $N \cong U_{2, \ell + 1}$.
 \end{definition}

We next illustrate the smallest non-trivial case.

\begin{example}\label{ex:parallel connection}
Let $M$ and $N$ both be isomorphic to the uniform matroid $U_{2,3}$, on ground sets $\{1,2,3\}$ and $\{3,4,5\}$, respectively. We construct $P_3(M,N)\in \mathcal{M}_{3,2}$.
The parallel connection $P_3(M,N)$ is the matroid on $\{1,2,3,4,5\}$ for which:
\begin{itemize}
    \item Singletons $\{i\}$ are flats of rank~$1$.
    \item The sets $\{1,2,3\}$ and $\{3,4,5\}$ are flats of rank~$2$.
    \item Any pair $\{i,j\}$ with $i \in \{1,2\}$ and $j \in \{4,5\}$ is also a flat of rank~$2$, since its intersections with both $M$ and $N$ are rank-$1$ flats.
    \item The full ground set $\{1,2,3,4,5\}$ is a flat of rank~$3$.
\end{itemize}

Geometrically, $P_3(M,N)$ can be viewed as two lines sharing a single common point $3$.  
This configuration is illustrated in \Cref{fig: parallel connection example}.

\begin{figure}[htb] 
\[
 \begin{tikzpicture}[scale=0.3]
 
  \filldraw (-3,0) circle (10pt) node[below=4pt] {$3$};
  \filldraw (0,0) circle (10pt) node[below=4pt] {$4$};
  \filldraw (3,0) circle (10pt) node[below=4pt] {$5$};

  \filldraw (-3,3) circle (10pt) node[left=4pt] {$2$};
  \filldraw (-3,6) circle (10pt) node[left=4pt] {$1$};
  
   \draw[thick] (-3,6) -- (-3,0);
  \draw[thick] (-3,0) -- (3,0);
  
 \end{tikzpicture}
 \]
 \caption{The parallel connection of two matroids $M$ and $N$, each isomorphic to $U_{2,3}$.} 
 \label{fig: parallel connection example}
\end{figure}

\end{example}

Since each subsequent parallel connection with $U_{2,\ell+1}$ increases the rank by one \cite[Proposition 7.1.15(i)]{Oxley-2011}, each matroid in $\cM_{r,\ell}$ has rank $r$.
And since each subsequent parallel connection increases the number of elements by $\ell$, each matroid in $\cM_{r,\ell}$ has $\ell(r - 1) + 1$ elements.
Bonin proved that the class of positroids is closed under parallel connections \cite[Corollary 4.18]{Bonin-2024} and, by Example~\ref{ex:uniform}, rank-$2$ uniform matroids are positroids. Thus, every matroid in $\cM_{r,\ell}$ is a positroid.
The following lemma implies that each matroid in $\cM_{r,\ell}$ is $U_{2,\ell+2}$-minor-free.

\begin{lemma} \label{lem: the extremal family is U2l-minor-free}
    For each integer $\ell \ge 1$, if a matroid $M$ is $U_{2, \ell+2}$-minor-free, then $P_e(M, U_{2, \ell+1})$ is $U_{2,\ell+2}$-minor-free.
\end{lemma}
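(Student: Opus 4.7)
The plan is to argue by contradiction: suppose $N \cong U_{2,\ell+2}$ is a minor of $P := P_e(M, U_{2,\ell+1})$, obtained by deleting a set $D$ and contracting a disjoint set $C$. The main tool is the standard compatibility of parallel connection with minor operations: for each $f \in E(M) - \{e\}$ (and symmetrically for $f \in E(U_{2,\ell+1}) - \{e\}$) one has $P_e(M,U_{2,\ell+1}) \del f = P_e(M \del f, U_{2,\ell+1})$ and $P_e(M, U_{2,\ell+1})/f = P_e(M/f, U_{2,\ell+1})$, while $P_e(M, U_{2,\ell+1}) \del e = (M \del e) \oplus (U_{2,\ell+1} \del e)$ and $P_e(M, U_{2,\ell+1})/e = (M/e) \oplus (U_{2,\ell+1}/e)$. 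A key property of $N$ that I plan to use repeatedly is that $N$ is both connected and simple, since $\ell + 2 \ge 3$.

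I would split into three cases based on the role of $e$. If $e \in D$, then $N$ is a minor of $(M \del e) \oplus U_{2,\ell}$; being connected, it lies in one summand, and since $|N| = \ell+2 > \ell = |U_{2,\ell}|$ it must be a minor of $M \del e$, hence of $M$---contradicting the hypothesis. If $e \in C$, then $N$ is a minor of $(M/e) \oplus (U_{2,\ell+1}/e)$, and since $U_{2,\ell+1}/e$ has rank $1$ while $N$ has rank $2$ and is connected, $N$ is a minor of $M/e$ and hence of $M$, again a contradiction.

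The case $e \notin D \cup C$ is the most delicate. By iteratively applying the commuting identities above, I would rewrite the minor as $N = P_e(M', N')$ where $M'$ is a minor of $M$ and $N'$ is a minor of $U_{2,\ell+1}$, each containing $e$. From the parallel-connection rank formula, $2 = r(N) = r(M') + r(N') - 1$, so one of the two factors has rank $1$. If $r(N') = 1$, then every element of $E(N') - \{e\}$ is parallel to $e$ in $P_e(M', N') = N$; simplicity of $N$ then forces $N' = \{e\}$, so that $N = M'$ is a minor of $M$. Symmetrically, if $r(M') = 1$ then $M' = \{e\}$ and $N = N'$ is a minor of $U_{2,\ell+1}$, which is impossible since $|N| > |U_{2,\ell+1}|$. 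The main obstacle will be organizing this third case carefully, in particular justifying the reduction $N = P_e(M', N')$ by iterated use of the commuting identities; the other cases reduce immediately to the fact that a connected matroid in a direct sum must lie entirely in one summand.
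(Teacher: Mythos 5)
Your proposal is correct and follows essentially the same route as the paper's proof: both push the minor operations through the parallel connection via Oxley's Proposition 7.1.15, use connectivity of $U_{2,\ell+2}$ to dispose of the case where $e$ is contracted (the direct-sum case), and in the remaining case use the rank formula $r(P_e(M',N')) = r(M') + r(N') - 1$ together with simplicity to show the rank-$1$ factor collapses, leaving a minor of $M$ or of $U_{2,\ell+1}$. Your explicit handling of the case $e \in D$ is subsumed in the paper by first normalizing the contraction set to a flat and arguing about simplifications, but this is only an organizational difference.
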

\begin{proof}
    Let $N = P_e(M, U_{2, \ell+1})$, and suppose that $N/X \del Y \cong U_{2, \ell+2}$. 
    We may assume that $r(N/X) = 2$.
    Since $U_{2,\ell+2}$ is loopless we may further assume that $X$ is a flat of $N$.
    If $e \in X$, then by \cite[Proposition 7.1.15(iii)]{Oxley-2011}, $U_{2,\ell+2}$ is a minor of the direct sum $N/e=(M/e) \oplus (U_{2, \ell+1}/e)$.
    Since $U_{2,\ell+2}$ is connected it follows that $M/e$ has $U_{2,\ell+2}$ as a minor, a contradiction.
    So $e \notin X$.
    By \cite[Proposition 7.1.15(v)]{Oxley-2011}, if we write $X_1 = X \cap E(M)$ and $X_2 = X - E(M)$, then $U_{2, \ell+2}$ is a minor of $N/X=P_e(M/X_1, U_{2, \ell+1}/X_2)$. This also tells us that $r(M/X_1)+r(U_{2, \ell+1}/X_2)=r(N/X)+1=3$.
    If $X_2$ is empty, then $r(M/X_1)=1$ and so each element in $E(M) - X_1$ is parallel in $P_e(M/X_1, U_{2, \ell+1}/X_2)$ to $e$.
    Then $P_e(M/X_1, U_{2, \ell+1}/X_2)$ simplifies to $U_{2,\ell+1}$ and therefore has no $U_{2, \ell+2}$-minor.
    So $X_2$ is non-empty.
    Then $P_e(M/X_1, U_{2, \ell+1}/X_2)$ simplifies to $\si(M/X_1)$, and therefore has no $U_{2, \ell+2}$-minor.
\end{proof}

So every matroid in $\cM_{r,\ell}$ is a simple, rank-$r$ positroid with no $U_{2,\ell+2}$-minor and $\ell(r - 1) + 1$ elements, and is therefore a sharp example for Theorem \ref{thm: main}.

\section{The main proof} \label{sec: the main proof}

We can now prove our main result, Theorem \ref{thm: main}.
We will in fact show the following more general result, which directly implies Theorem \ref{thm: main} via Propositions \ref{prop: rank 3 excluded minor} and \ref{prop: rank 4 excluded minor}.

\begin{theorem} \label{thm: main result with proof}
For all integers $r, \ell \ge 1$, if $M$ is a simple rank-$r$ matroid with no minor from Figure \ref{fig: rank-3 excluded minors} or Figure \ref{fig: rank-4 excluded minor} and no $U_{2,\ell+2}$-minor, then $|M| \le \ell(r - 1) + 1$.
Moreover, equality holds if and only if $M$ can be obtained by taking parallel connections of copies of $U_{2, \ell+1}$.
\end{theorem}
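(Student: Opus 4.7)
The plan is to induct on the rank $r$. The base cases $r = 1$ (trivially $|M| \le 1$) and $r = 2$ (where $M \cong U_{2, |M|}$, so the no-$U_{2, \ell+2}$-minor condition gives $|M| \le \ell + 1$, with equality iff $M \cong U_{2, \ell+1} \in \cM_{2, \ell}$) are immediate.

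For $r \ge 3$, the heart of the proof is the following \textbf{Key Claim}: \emph{there exists $e \in E(M)$ lying on at most one long line of $M$.} Granting this, setting $\bar M := \si(M / e)$ yields a simple rank-$(r - 1)$ matroid inheriting both minor-exclusion hypotheses (which are closed under minors and simplification), and a direct count of parallel classes in $M / e$ gives
\[
|M| - |\bar M| \;=\; 1 + \sum_{\substack{L \text{ long} \\ e \in L}}(|L| - 2) \;\le\; \ell,
\]
where I use $|L| \le \ell + 1$ (forced by the $U_{2, \ell+2}$-minor-free hypothesis applied to $M | L$) and that the sum has at most one term by the Key Claim. Combined with the inductive hypothesis $|\bar M| \le \ell(r - 2) + 1$, this gives $|M| \le \ell(r - 1) + 1$.

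To prove the Key Claim, I would argue by contradiction, assuming every element of $M$ lies on at least two long lines. Fix any long line $L$. For each $e \in L$, let $\mathcal P_e$ be the set of planes containing $L$ and some long line through $e$ other than $L$; by hypothesis $|\mathcal P_e| \ge 1$. For each plane $P \supseteq L$, set $S_P := \{e \in L : P \in \mathcal P_e\}$; Proposition~\ref{prop: rank 3 excluded minor} applied to $M | P$ forces $|S_P| \le 2$. If distinct $e_1, e_2, e_3 \in L$ and distinct $P_i \in \mathcal P_{e_i}$ can be chosen, Proposition~\ref{prop: rank 4 excluded minor} immediately produces a forbidden minor. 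Otherwise, by Hall's theorem the bipartite incidence graph of $L$ versus active planes admits no matching of size three, which together with $|S_P| \le 2$ forces $|L| \le 4$ with every long line through a point of $L$ contained in one of at most two distinct planes $P, P'$ through $L$. I would then iterate the analysis on a long line $L_1 \subseteq P$ through a point of $L$, using the finiteness of $M$ to eventually reach a configuration fitting Proposition~\ref{prop: rank 3 excluded minor} or~\ref{prop: rank 4 excluded minor}. The main obstacle is this iteration: the constraint ``all long lines through $L$ lie in $P \cup P'$'' does not directly pass to $L_1$, and a careful tracking of which planes through $L_1$ contain the relevant long lines, possibly coupled with a terminating invariant (such as the total number of long lines in $P \cup P'$), will be needed to force the contradiction.

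For the equality characterization, if $|M| = \ell(r-1) + 1$ then both intermediate inequalities are tight: $|L| = \ell + 1$ so $M | L \cong U_{2, \ell+1}$, and $|\bar M| = \ell(r-2) + 1$ so $\bar M \in \cM_{r-1, \ell}$ by induction. Since $e$ lies only on the long line $L$, the remaining elements of $L$ attach to the rest of $M$ through a single junction point $f \in L - \{e\}$; verifying the flat decomposition through this point shows $M = P_f(\bar M, M | L) \in \cM_{r, \ell}$, completing the characterization.
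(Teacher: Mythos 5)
Your high-level strategy (induct on $r$, find an element on few long lines, contract, count parallel classes) is reasonable and the arithmetic in your counting step is correct, but the proof of your Key Claim contains a step that is actually false, and this is where the real content of the theorem lives. You assert that if distinct $e_1,e_2,e_3\in L$ and distinct planes $P_i\in\mathcal P_{e_i}$ can be chosen, then Proposition~\ref{prop: rank 4 excluded minor} ``immediately produces a forbidden minor.'' That proposition applies only to rank-$4$ matroids, and in higher rank three distinct planes through $L$, each containing a second long line through a distinct point of $L$, need not yield any forbidden minor. Indeed the extremal examples themselves realize this configuration: take the parallel connection of $U_{2,3}$'s with central line $L=\{1,2,3\}$ and lines $\{1,4,5\}$, $\{2,6,7\}$, $\{3,8,9\}$ attached at $1,2,3$. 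This is a rank-$5$ member of $\cM_{5,2}$, hence has no forbidden minor, yet the three planes $\cl(L\cup\{4,5\})$, $\cl(L\cup\{6,7\})$, $\cl(L\cup\{8,9\})$ are distinct, pass through $L$, and each $e_i$ lies on two long lines in $P_i$. The obstruction is that these three planes are in ``general position'': contracting $L$ sends them to three independent points, i.e.\ a $U_{3,3}$, and no rank-$4$ restriction of $M$ contains the whole configuration. The paper's proof is built precisely to handle this: it partitions $E(M)-L$ into the ground sets of the connected components of $M/L$, works inside a single component $N$, and uses connectivity of $\si(N/L)$ to extract a \emph{connected} minor ($U_{1,3}$ or $U_{2,3}$) on the three points corresponding to the planes; only then do Propositions~\ref{prop: rank 3 excluded minor} and~\ref{prop: rank 4 excluded minor} apply. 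The conclusion obtained is correspondingly weaker than your Key Claim --- only that within each component $N$ some $f\in L$ lies on no long line of $N$ other than $L$ --- and the bound is assembled by applying induction to each $N/f$ and summing over components, not by a single contraction.

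Two further gaps: the iteration you describe for the remaining case of the Key Claim ($|L|\le 4$, all relevant long lines in two planes) is explicitly unresolved, and since the matching case is handled incorrectly, the whole Key Claim remains unproven (it is in fact true for these matroids, but you would essentially need the paper's component decomposition to prove it, at which point the Key Claim is no longer needed). In the equality case, the assertion that the elements of $L$ ``attach to the rest of $M$ through a single junction point $f$'' is exactly what must be proved; the paper does this by locating a point $x\in L$ on a second long line, showing $M/x$ must be disconnected (a connected $M/x$ would force an $M(K_4)$-restriction or a $U_{2,\ell+2}$-minor via a circuit meeting both lines), and then invoking $M=P_x(M_1,M_2)$ together with induction on each part.
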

\begin{proof}
First suppose that $\ell = 1$.
If $M$ is a simple rank-$r$ matroid with more than $r$ elements, then $r \ge 2$ and $M$ has a circuit with at least three elements.
But every circuit with at least three elements has a $U_{2,3}$-minor \cite[Example 3.1.9]{Oxley-2011}, a contradiction.
So $M$ is a rank-$r$ matroid with $r$ elements, and it follows that $M$ is the parallel connection of $r - 1$ copies of $U_{2,2}$, as desired.
So we may assume that $\ell \ge 2$.
We will proceed by induction on $r$.
The statement is clearly true when $r \le 2$ so we may assume that $r \ge 3$.
If $M$ has an element $e$ that is not on any long lines, then $M/e$ is simple, $|M/e| = |M| - 1$, and $r(M/e) = r - 1$, so by induction we have
\begin{align*}
    |M| = 1 + |M/e| \le 1 + \ell(r - 2) + 1 = \ell(r - 1) + 2 - \ell < \ell(r - 1) + 1,
\end{align*}
as desired.
So we may assume that every element of $M$ is on a long line.
Let $L$ be a long line of $M$.
We will now consider the connected components of $M/L$.
Let $\cX$ be the partition of $E(M) - L$ given by the ground sets of the connected components of $M/L$.
Let $X \in \cX$, and let $N = M|(L \cup X)$.
We will first bound the number of elements of $N \del L$.

\begin{claim} \label{claim: bound the size of each component}
    At most two elements in $L$ are on a long line of $N$ other than $L$, and $|N \del L| \le \ell(r(N) - 2)$.
\end{claim}
\begin{proof}
Suppose that distinct elements $e_1$, $e_2$, and $e_3$ in $L$ are on long lines $L_1$, $L_2$, and $L_3$, respectively, of $N$ other than $L$.
Let $K = \si(N/L)$.
By deleting elements from nontrivial parallel classes of $N/L$ we may assume that $K$ is a restriction of $N/L$.
Note that the elements of $K$ are in one-to-one correspondence with the planes of $N$ containing $L$.
For $i = 1,2,3$ let $p_i$ be the element of $K$ in the plane $\cl_N(L \cup L_i)$ of $N$.
Since $K$ is connected, it follows from \cite[Theorem 4.3.1]{Oxley-2011} that $K$ has a connected minor $K_1 = K \del D/C$ on ground set $\{p_1, p_2, p_3\}$.
We may assume that $C$ is independent in $K$ and therefore in $N/L$, which implies that $r_{N/C}(L) = r_N(L) = 2$.
Since $K_1$ is connected, it is isomorphic to $U_{1,3}$ or $U_{2,3}$.
Let $N_1$ be obtained from $N$ by deleting, for each element $d$ in $D$, the elements in $\cl_N(L \cup d) - L$, and then contracting $C$.
Then $K_1 = \si(N_1/L)$ and $r(N_1) = r(K_1) + 2$.
We will show that $N_1$ has a minor from Figure \ref{fig: rank-3 excluded minors} or \ref{fig: rank-4 excluded minor}.

Since $r_{N_1}(L) = r_{N/C}(L) = 2$ and $K_1$ has ground set $\{p_1, p_2, p_3\}$, for each $i \in \{1,2,3\}$ the set $\cl_{N_1}(L \cup p_i)$ is a plane of $N_1$ containing $L$.
Then $r_{N_1}(L \cup p_i) = r_N(L \cup p_i)$, which implies that $r_{N_1}(L_i) = r_N(L_i) = 2$ for $i = 1,2,3$.
If $K_1 \cong U_{1,3}$, then $N_1$ is a rank-$3$ matroid with a long line $\cl_{N_1}(L)$ and long lines $\cl_{N_1}(L_i)$ for $i = 1,2,3$ through distinct elements of $L$.
By Proposition \ref{prop: rank 3 excluded minor}, $M$ has a minor from Figure \ref{fig: rank-3 excluded minors}, a contradiction.
So $K_1 \cong U_{2,3}$.
Then $N_1$ is a rank-$4$ matroid with three distinct planes through $\cl_{N_1}(L)$ (namely $\cl_{N_1}(L \cup L_i)$ for $i = 1,2,3$) so that each has a long line through a distinct element of $\cl_{N_1}(L)$.
By Proposition \ref{prop: rank 4 excluded minor}, $M$ has a minor from Figure \ref{fig: rank-3 excluded minors} or \ref{fig: rank-4 excluded minor}, a contradiction.

So there is some $f \in L$ on no long lines of $N$ other than $L$.
Then $|N| - \elem(N/f) = |L| - 1$, so 
by induction on $r$ we have
\begin{align*}
    |N| = |L| - 1 + \elem(N/f) \le |L| - 1 + \ell(r(N/f) - 1) + 1 
    = |L| + \ell(r(N) - 2),
\end{align*}
and the claim holds.
\end{proof}

We can now show that $|M| \le \ell(r - 1) + 1$.
Let $\cX = \{X_1, X_2, \dots, X_k\}$ for some positive integer $k$, and for each $i \in [k]$ let $N_i = M|(L \cup X_i)$.
Since the sets in $\cX$ are the ground sets of the connected components of $M/L$ we have $\sum_{i = 1}^k r(N_i/L) = r(M/L)$, so $\sum_{i = 1}^k \left(r(N_i) - 2\right) = r - 2$.
Using Claim \ref{claim: bound the size of each component} we have
\begin{align*}
    |M\del L| &= \sum_{i = 1}^k |N_i \del L| \\
            &\le \sum_{i = 1}^k \ell(r(N_i) - 2) \\
            &= \ell \sum_{i = 1}^k \left(r(N_i) - 2 \right)\\
            &= \ell(r - 2).
\end{align*}
Therefore 
\begin{align*}
    |M| \le \ell(r - 2) + |L| \le \ell(r - 2) + \ell + 1 = \ell(r - 1) + 1,
\end{align*} as desired.

It remains to characterize when equality holds.
Suppose that $r \ge 2$ and $|M| = \ell(r - 1) + 1$.
From the inequalities above and the fact that $L$ is an arbitrary long line, we see that every long line of $M$ has $\ell + 1$ elements.
In particular, if $r = 2$ then $M \cong U_{2,\ell+1}$, as desired.
So we may assume that $r \ge 3$.
We first show that $M$ is connected.
If not, then $M = M_1 \oplus M_2$, and by induction on $r$ we have 
\begin{align*}
    |M| &= |M_1| + |M_2| \\
    &\le \ell(r(M_1) - 1) + 1 + \ell(r(M_2) - 1) + 1 \\
    &= \ell(r - 1) + 2 - \ell <
    \ell(r - 1) + 1,
\end{align*}
a contradiction, where we use the fact that we considered $\ell=1$ earlier in the final inequality.
So, $M$ is connected.

We next identify a special element of $L$.

\begin{claim}
    Some $x \in L$ is on more than one long line of $M$.
\end{claim}
\begin{proof}
Let $e \in L$ and let $M_1 = M/e$.
Let $P = L - e$, and note that $P$ is a nontrivial parallel class of $M_1$. 
If $e$ is on more than one long line of $M$, we have nothing to show. 
Otherwise, $\elem(M_1) = |M| - \ell = \ell(r(M_1) - 1) + 1$, so by induction, $\si(M_1)$ is obtained by taking parallel connections of copies of $U_{2,\ell+1}$.
This implies that $P$ is on a long line $L_1$ of $M_1$.
Let $F$ be the plane $L \cup L_1$ of $M$.
Note that $|F| \ge |L| + |L_1| - 1 \ge 2(\ell + 1) - 1 = 2\ell + 1$.
If $F - L$ has a $3$-element independent set $I$, then $M|F$ has an $M(K_4)$-restriction (by restricting to $I\cup L$ if each pair of points in $I$ spans a point on $L$) or a $U_{2, \ell+2}$-minor, a contradiction.
So $F - L$ has rank $2$.
If $\cl(F - L)$ does not intersect $L$, then for every $f \in F - L$ we have $(M|F)/f \cong U_{2,\ell+2}$, a contradiction. 
So $\cl(F - L)$ intersects $L$ in some element $x$.
Since $|F - L| \ge \ell \ge 2$, it follows that $\cl(F - L)$ is a long line of $M$ that contains $x$ and is not equal to $L$, as desired.
\end{proof}

Let $L'$ be a long line of $M$ through $x$ other than $L$.
First suppose that $M/x$ is connected.
By \cite[Proposition 4.1.3]{Oxley-2011} there is a circuit $C$ of $\si(M/x)$ that intersects $L$ and $L'$.
Let $C_1 \subseteq C$ so that $|C - C_1| = 3$ and $C_1$ is disjoint from $L$ and $L'$.
Then $\si(M/x/C_1)$ has a $3$-element circuit $C - C_1$ that intersects $L$ and $L'$.
Note that $C_1$ is independent in $M/L$ and $M/L'$, so $L$ and $L'$ both have rank $2$ in $M/C_1$.
Let $M_1$ be the restriction of $M/C_1$ to $\cl_{M/C_1}(L \cup L')$.
Then $M_1$ is a rank-$3$ matroid. 
Since $\si(M/x/C_1)$ has $\si(M_1/x)$ as a restriction, $M_1/x$ has a $3$-element circuit $C - C_1$, and so there is some $c \in C - C_1$ that is in $E(M_1)$ but not $\cl_{M_1}(L)$ or $\cl_{M_1}(L')$.
If $c$ is on two or more long lines of $M_1$, each of which has nonempty intersection with both $\cl_{M_1}(L)$ and $\cl_{M_1}(L')$, then $M_1$ has an $M(K_4)$-restriction, a contradiction.
Otherwise, $M_1/c$ has a $U_{2, \ell+2}$-restriction, a contradiction.
So $M/x$ is disconnected.

Let $M/x = N_1 \oplus N_2$ for non-empty matroids $N_1$ and $N_2$.
For $i = 1,2$ let $M_i = M|(E(N_i) \cup x)$.
By \cite[Theorem 7.1.16(ii)]{Oxley-2011} we know that $M = P_x(M_1, M_2)$.
If $|M_1| < \ell(r(M_1) - 1) + 1$, then
\begin{align*}
|M| &= |M_1| + |M_2| - 1 \\
&< \ell(r(M_1) - 1) + 1 + \ell(r(M_2) - 1) + 1 -1\\
& = \ell(r(M_1) + r(M_2) - 2) + 1 \\
&= \ell(r(M) - 1) + 1,
\end{align*}
a contradiction.
So, by induction on $r$, $M_1$ and $M_2$ are both obtained by taking parallel connections of copies of $U_{2,\ell+1}$.
Since $M = P_x(M_1, M_2)$ and parallel connection is associative (this follows from \cite[Proposition 7.1.23]{Oxley-2011}), it follows that $M$ can also be obtained by taking parallel connections of copies of $U_{2, \ell+1}$.
\end{proof}

When $\ell \ge 5$, the class of matroids considered in Theorem \ref{thm: main result with proof} contains the matroid of Figure \ref{fig: a rank-3 excluded minor}, and therefore strictly contains the class of $U_{2,\ell+1}$-minor-free positroids.
Moreover, as $\ell$ grows, the class contains an increasing number of rank-$3$ excluded minors for the class of positroids, as shown in \cite[Corollary 4.12]{Blum-2001}.
When $\ell = 1$ the matroids of Theorem \ref{thm: main result with proof} are all uniform and are therefore positroids, and when $\ell = 2$ the matroids of Theorem \ref{thm: main result with proof} are precisely the binary matroids with no $M(K_4)$-minor, which are known to be positroids by \cite[Theorem 5.1]{Blum-2001}.
However, when $\ell \in \{3,4\}$ it is unclear whether or not the matroids of Theorem \ref{thm: main result with proof} are all positroids.
We leave this as an open problem.

\section{Other minor-closed classes of matroids}\label{sec:minor-closed-classes}

In this section, we define gammoids and study subclasses of them, commenting on their relations to the class of positroids and proving an appropriate analogue of \Cref{thm: main result with proof} for each. Specifically, we look at laminar, colaminar, lattice path, multi-path, bicircular and path-circular matroids. A summary of this section, showing the classes we consider, the containments between them, and whether \Cref{thm: main result with proof} holds for each class, can be found in \Cref{fig:poset-of-classes}.

Gammoids are most easily defined in terms of transversal matroids. 
A matroid $M$ on ground set $E$ is \emph{transversal} if there is a collection $\cA$ of subsets of $E$ so that the independent sets of $M$ are precisely the \emph{partial transversals} of $\cA$, where $I \subseteq E$ is a partial transversal of $\cA$ if there is a subset $\cA'$ of $\cA$ and bijection $\psi \colon \cA' \to I$ such that $\psi(A) \in A$ for all $A \in \cA'$.
Equivalently, if $G$ is the bipartite graph with bipartition $(E, \cA)$ and $e \in E$ adjacent to $A \in \cA$ if and only if $e \in A$, then $I \subseteq E$ is a partial transversal of $\cA$ if and only if there is a matching that saturates $I$. For such a transversal matroid, we write $M=M(E,\mathcal{A})$ and we say that $\cA$ is a \emph{presentation} of $M$.

The class of transversal matroids is closed under restriction but not duality or contraction.
\emph{Strict gammoids} are duals of transversal matroids, and \emph{gammoids} are minors of transversal matroids.
The classes of transversal matroids and strict gammoids are incomparable with the class of positroids, but every positroid is a gammoid \cite{Chidiac-Hochstattler-2024}.

We will use a characterization of strict gammoids due to Mason.
Let $E$ be the ground set of a matroid $M$, and let $\alpha_M \colon 2^E \to \mathbb Z$ be defined recursively by $\alpha_M(\emptyset) = 0$ and $\alpha_M(X) = |X| - r(X) - \sum_{F \in \overline{\cF}(X)}\alpha_M(F)$, where $\overline{\cF}(X)$ is the set of proper subsets of $X$ that are also flats of $M$.
A result of Mason \cite[Theorems 2.2 and 2.4]{Mason-1972} states that:

\begin{theorem}[Mason \cite{Mason-1972}] \label{thm: Mason result}
A matroid $M$ is a strict gammoid if and only if $\alpha_M(X) \ge 0$ for each $X \subseteq E$.   
\end{theorem}

Gammoids satisfy further nice properties which, in light of \Cref{sec: introduction}, make them reasonable candidates for a version of \Cref{thm: main}.
Specifically, $M(K_4)$ is not a gammoid \cite[page 435, Exercise 11]{Oxley-2011}, every gammoid is $\bR$-representable \cite[Proposition 3.9]{Ingleton-Piff-1973}, and the class of gammoids is minor-closed by definition. 
While we leave this problem open, we discuss it further in \Cref{subsec:MK4-free}. 
Here, we will conclusively resolve the analogous problem for a number of established minor-closed subclasses of gammoids.

\subsection{Laminar matroids} \label{sec: laminar}

Laminar matroids were first studied in optimization in the context of the matroid secretary problem \cite{Secretary-laminar-2011, Secretary-laminar-2013}, and have since been studied solely for their structural properties \cite{laminar-matroids, Generalized-laminar}.
They can be realized by a capacity function in the following way: Let $\mathcal{A}$ be a \emph{laminar family} of sets, that is a family of sets in which, for any $A,B\in\mathcal{A}$, either $A\cap B=\emptyset$, $A\subseteq B$, or $B\subseteq A$. Let $E$ be a finite set and let $c:E\rightarrow\mathbb{R}$ be a function, which we call the \emph{capacity}. We call a subset $I\subseteq E$ independent if $c(I\cap A)\geq |I\cap A|$ for all $A\in \mathcal{A}$. This defines a matroid $M(E, c, \mathcal{A})$. 
Any matroid isomorphic to such a matroid is called \emph{laminar}. 
The class of laminar matroids is closed under minors but not duality \cite[Theorem 1.2]{laminar-matroids}; a matroid dual to a laminar matroid is called \emph{colaminar}. 
As described in \cite{laminar-matroids}, Finkelstein \cite{Finkelstein_thesis} proved that all laminar and colaminar matroids are gammoids.
However, the class of laminar matroids is incomparable with the class of transversal matroids, and there are transversal matroids that are not colaminar. 
The matroid in Figure \ref{fig: parallel connection example} (known as $Y_3$) and its dual are both transversal, but $Y_3$ is not laminar and $Y_3^*$ is not colaminar \cite[Theorem 1.2]{laminar-matroids}, and the matroid obtained from $U_{2,3}$ by making a parallel copy of each element is laminar but not transversal \cite[Figure 1.20]{Oxley-2011}.
On the other hand, we will prove that every colaminar matroid is transversal.
To the best of our knowledge, this was not previously known.

\begin{proposition} \label{prop: colaminar implies transversal}
Every laminar matroid is a strict gammoid. Dually, every colaminar matroid is transversal.
\end{proposition}
\begin{proof}
By \cite[Theorem 1.5]{laminar-matroids}, every laminar matroid can be obtained from copies of $U_{1,1}$ by taking direct sums and truncations of previously constructed matroids.
So it suffices to show that the class of strict gammoids is closed under direct sums and truncations.
By \cite[Proposition 4.2.11]{Oxley-2011} the class of transversal matroids is closed under direct sums, so by \cite[4.2.21]{Oxley-2011} and duality the class of strict gammoids is closed under direct sums.

For truncations, we will use Theorem \ref{thm: Mason result}.
Let $N$ be a strict gammoid on ground set $E$ and let $N'$ be the truncation of $N$.
For a set $X \subseteq E$, every proper subset of $X$ that is a flat of $N'$ is also a flat of $N$, and a every proper subset of $X$ that is a flat of $N$ is either a flat of $N'$ (if it has rank at most $r(N) - 2$) or a hyperplane of $N$.
In other words, writing $\overline \cH_N(X)$ for the set of hyperplanes of $N$ properly contained in $X$, we have $\overline \cF_{N}(X) = \overline \cF_{N'}(X) \cup \overline \cH_N(X)$.
Therefore 
\begin{align}
\alpha_{N}(X) &= |X| - r_{N}(X) -  \sum_{F \in \overline{\cF}_{N}(X)}\alpha_N(F)\\
&= |X| - r_{N}(X) -  \sum_{F \in \overline{\cF}_{N'}(X)}\alpha_N(F)
-  \sum_{F \in \overline{\cH}_{N}(X)}\alpha_N(F) \\
&= |X| - r_{N}(X) -  \sum_{F \in \overline{\cF}_{N'}(X)}\alpha_{N'}(F)
-  \sum_{F \in \overline{\cH}_{N}(X)}\alpha_N(F) \\
&\le |X| - r_{N'}(X) - \sum_{F \in \overline{\cF}_{N'}(X)}\alpha_{N'}(F)\\
&= \alpha_{N'}(X).
\end{align}
Line (3) holds because if $F$ is a proper flat of $N'$, then $\alpha_N(F) = \alpha_{N'}(F)$ because $N|F = N'|F$, and $N$ and $N'$ have the same flats contained in $F$.
Line (4) holds because $r_N(X) \ge r_{N'}(X)$, and $\alpha_N(F) \ge 0$ for all $F \in \overline \cH_N(X)$ because $N$ is a strict gammoid.
Since $\alpha_N(X) \ge 0$ for all $X \subseteq E$ we see that $\alpha_{N'}(X) \ge 0$ for all $X \subseteq E$, so $N'$ is a strict gammoid.
\end{proof}

There is an equivalent characterization of laminar matroids in more familiar matroid-theoretic language due to Fife and Oxley \cite[Theorem 1.1]{laminar-matroids}.

\begin{theorem}[Fife, Oxley \cite{laminar-matroids}] \label{thm: laminar circuits}
A matroid $M$ is laminar if and only if for any two disjoint circuits $C_1$ and $C_2$ of $M$, either $\cl(C_1)\subseteq \cl(C_2)$ or $\cl(C_2)\subseteq \cl(C_1)$.
\end{theorem}

We prove another nice property of laminar matroids, which will easily imply that every laminar matroid is a positroid.
A flat $F$ of a matroid $M$ is \emph{connected} if $M|F$ is connected.

\begin{proposition} \label{prop: orderings of laminar matroids}
If $M$ is a laminar matroid on ground set $E$, then there is a total ordering $<$ of $E$ such that every connected flat $F$ is an interval in $<$ order.
\end{proposition}
\begin{proof}
We proceed by induction on $|E|$.
The statement is clearly true when $|E| = 1$, so we may assume that $|E| \ge 2$.
If $M$ has a loop then every connected flat has at most one element and the statement holds trivially, so we may assume that $M$ is loopless.
Let $\cF = \{F_1, F_2, \dots, F_t\}$ be the collection of maximal proper connected flats of $M$.
Since $M$ is loopless, every element is in a set in $\cF$.
By \cite[Corollary 2.14]{laminar-matroids}, each pair of sets in $\cF$ is either disjoint or nested.
Since each set in $\cF$ is maximal, it follows that the sets in $\cF$ are pairwise disjoint.
For each $F \in \cF$, the matroid $M|F$ is laminar because the class of laminar matroids is minor-closed \cite[Lemma 3.1]{laminar-matroids}.
By induction, for each $i \in [t]$ there is a total ordering $<_i$ of $M|F_i$ such that every connected flat is an interval in $<_i$ order.
Let $<$ be the concatenation of $<_1, <_2, \dots, <_t$ in that order.
Then $<$ is a total order of $E$, and since every connected flat of $M$ is a connected flat of $M|F_i$ for some $i \in [t]$ by the maximality of the flats in $\cF$, every connected flat of $M$ is an interval in $<$ order.
\end{proof}

To the best of our knowledge, the following corollary was not previously known.

\begin{corollary} \label{cor: laminar implies positroid}
Every laminar or colaminar matroid is a positroid.
\end{corollary}
\begin{proof}
By Proposition \ref{prop: orderings of laminar matroids} and Theorem \ref{thm:Bonin's condition}, every laminar matroid is a positroid.
Since the class of positroids is closed under duality, every colaminar matroid is a positroid.
\end{proof}

By Corollary \ref{cor: laminar implies positroid}, the upper bound of Theorem \ref{thm: main result with proof} applies to laminar and colaminar matroids.
Interestingly, while colaminar matroids achieve the bound, laminar matroids do not.
Clearly when $r \ge 3$ and $\ell \ge 2$ every matroid in $\cM_{r,\ell}$ has $Y_3$ as a minor, so no matroid in $\cM_{r,\ell}$ is laminar.
Therefore Theorem \ref{thm: main result with proof} does not determine the maximum number of elements of a simple rank-$r$ laminar matroid with no $U_{2,\ell+2}$-minor. 

\begin{corollary}
    For all integers $r\geq 3$ and $ \ell \ge 2$, if $M$ is a rank-$r$ laminar matroid with no $U_{2,\ell+2}$-minor, then $|M| < \ell(r - 1) + 1$.
\end{corollary}

We leave a tight upper bound as an open problem.
On the other hand, at least one matroid in $\cM_{r,\ell}$ is colaminar for all $r\ge 3$ and $\ell \ge 2$, which gives the following result.

\begin{theorem}
For all integers $r,\ell\geq 1$, the maximum number of elements of a simple rank-$r$ colaminar matroid with no $U_{2,\ell+2}$-minor is $\ell(r - 1) + 1$.
\end{theorem}
\begin{proof}
By Theorem \ref{thm: main result with proof} and Corollary \ref{cor: laminar implies positroid} it suffices to show that $\cM_{r,\ell}$ contains a colaminar matroid.
This is true when $r \le 2$ or $\ell = 1$ because every uniform matroid is laminar, and therefore colaminar, by \cite[Theorem 1.5]{laminar-matroids}.
So we may assume that $r \ge 3$ and $\ell \ge 2$.
For all $r \ge 3$ and $\ell \ge 2$ let $S_r$ be the unique member of $\cM_{r,\ell}$ in which the same element is chosen as the basepoint of each parallel connection.
We claim that $S_r$ is colaminar.
Let $(x, L_1, L_2, \dots, L_{r-1})$ be the partition of $E(S_r)$ so that for each $i \in [r-1]$ the set $L_i \cup x$ is a long line through $x$.
The hyperplanes of $S_r$ are the unions of $r - 2$ long lines through $x$ and the transversals of $(L_1, L_2, \dots, L_{r-1})$.
Therefore the circuits of $S_r^*$ are the sets $L_i$ for $i \in [r-1]$ and the complements of the transversals of $(L_1, L_2, \dots, L_{r-1})$.
The former sets are pairwise disjoint and the latter sets are spanning circuits of $S_r^*$ by \cite[Proposition 2.1.6(i)]{Oxley-2011} since each transversal of $(L_1, L_2, \dots, L_{r-1})$ is independent in $S_r$.
Therefore for any two circuits $C_1, C_2$ of $S_r^*$ with $C_1 \cap C_2\ne \emptyset$, either $C_1$ spans $C_2$ or $C_2$ spans $C_1$, so $S_r^*$ is laminar by Theorem \ref{thm: laminar circuits}.
\end{proof}

\subsection{Minor-closed classes of transversal matroids} \label{subsec:minor-close-transversal}

The rest of the classes we consider are all transversal matroids.
We begin by deriving some useful facts about transversal matroids. We can easily show that the matroids of Figure \ref{fig: rank-3 excluded minors} are not transversal.

\begin{proposition} \label{prop: rank-3 non-transversal matroids}
Every rank-$3$ transversal matroid has at most three long lines.
\end{proposition}
\begin{proof}
Suppose that $M=M(E,\mathcal{A})$ is a rank-$3$ transversal matroid with distinct long lines $L_1,L_2,L_3,L_4$.
By \cite[Lemma 2.4.1]{Oxley-2011} we may assume that $|\cA| = r(M) = 3$; let $\cA = \{A_1, A_2, A_3\}$.
By \cite[page 98, Exercise 2]{Oxley-2011}, each $L_i$ is disjoint from a set in $\cA$.
So there are distinct $i,i' \in [4]$ and some $j \in [3]$ so that $L_i$ and $L_{i'}$ are both disjoint from $A_j$.
But then $r(L_i \cup L_{i'}) \le 2$, a contradiction.
\end{proof}

Using similar reasoning, \Cref{prop: a rank-4 non-transversal matroid} shows that the matroid of Figure \ref{fig: rank-4 excluded minor} is not transversal.
Two sets $X,Y$ in a matroid are \emph{skew} if $r(M) + r(Y) = r(X \cup Y)$.

\begin{proposition} \label{prop: a rank-4 non-transversal matroid}
Every rank-$4$ matroid with a set of three pairwise skew long lines is not transversal.
\end{proposition}
\begin{proof}
Suppose that $M=M(E,\mathcal{A})$ is a rank-$4$ transversal matroid with pairwise skew long lines $L_1,L_2,L_3$.
By \cite[Lemma 2.4.1]{Oxley-2011} we may assume that $|\cA| = 4$; let $\cA = \{A_1, A_2, A_3, A_4\}$.
By \cite[page 98, Exercise 2]{Oxley-2011}, each $L_i$ is disjoint from two sets in $\cA$.
So there are some $i,i' \in [3]$ and some $j \in [4]$ so that $L_i$ and $L_{i'}$ are both disjoint from $A_j$.
But then $r(L_i \cup L_{i'}) \le 3$, a contradiction.
\end{proof}

We now have the following corollary of Theorem \ref{thm: main result with proof}.

\begin{theorem} \label{thm: classes of transversal matroids}
Let $\cM$ be a minor-closed class of transversal matroids.
Then for all positive integers $r$ and $\ell$, every simple rank-$r$ matroid in $\cM$ with no $U_{2, \ell+2}$-minor has at most $\ell(r - 1) + 1$ elements. Moreover, equality only holds for iterated parallel connections of copies of $U_{2,\ell+1}$, if such a matroid is in $\cM$. 
\end{theorem}

There exist many positroids which are not transversal, and thus positroids do not belong to any minor-closed class of transversal matroids. 
See \cite{Positroids-and-Transversals} for a detailed discussion of which positroids are transversal.
For each minor-closed class of transversal matroids we discuss, we determine whether they are subclasses of positroids and whether they achieve the equality of \Cref{thm: classes of transversal matroids}.

\subsubsection{Transversal positroids}

We next study lattice path matroids and multi-path matroids, which are subclasses of both positroids and transversal matroids. 

We begin with lattice path matroids, first defined in \cite{BdMN}.  
A \emph{lattice path} is a path in the integer lattice $\mathbb{Z}^2$ which only takes steps in the directions $(1,0)$ (right-steps) and $(0,1)$ (up-steps).  
Fix two lattice paths $P_1,P_2$ from $(0,0)$ to $(n-r,r)$ for some $1\leq r\leq n$.
Note that such paths contain exactly $n$ steps, which we number in order. 
For instance, step 1 is either an up-step from $(0,0)$ to $(0,1)$ or a right-step $(0,0)$ to $(1,0)$. 
Assume $P_1$ lies weakly above $P_2$, that is, for any $(x,y_1)$ in $P_1$ and $(x,y_2)$ in $P_2$, we have $y_1\geq y_2$.
Define $B(P_1,P_2)$ to be the set containing the set of up-steps of each lattice path that is weakly below $P_1$ and weakly above $P_2$. 
Let $M(P_1,P_2)$ be the matroid with bases $B(P_1,P_2)$.
A \emph{lattice path matroid} is any matroid isomorphic to $M(P_1,P_2)$ for some lattice paths $P_1, P_2$ with $P_1$ lying weakly above $P_2$.

Lattice path matroids are positroids \cite[Lemma 23]{Oh}, and the class of lattice path matroids is closed under minors and duality \cite[Theorem 3.1]{Bonin-deMier-LPMs}.
Lattice path matroids are also transversal, as they are precisely the matroids which can be realized as $M(E,\mathcal{A})$ for $\mathcal{A}$ an antichain of intervals \cite[Theorem 3.3]{BdMN}. 
Sitting in the intersection of the highly structured sets of positroids and of transversal matroids, while also admitting a concrete description in terms of lattice paths, lattice path matroids are a natural testing ground for conjectures and open problems. 
Indeed, lattice path matroids have been studied from a number of perspectives, including categorical \cite{LPM-quotient}, polytopal \cite{LPM-Ehrhart}, and matroid theoretic \cite{BdMN}. 
There are matroids achieving equality in \Cref{thm: classes of transversal matroids} which are lattice path matroids \cite[end of Section 6]{Bonin-deMier-LPMs}.

Multi-path matroids, first introduced in \cite{Multipath}, are a generalization of lattice path matroids. 
They are a larger class dual-closed of matroids which also lies in the intersection of positroids and transversal matroids, and also admit a concrete description in terms of paths. 
This makes them another approachable class to study. 
The description in terms of paths is involved and will not help us here; we instead focus on the presentation of multi-path matroids as transversal matroids. 
\emph{Multi-path matroids} are precisely the matroids which can be realized as $M(E,\mathcal{A})$ on the ground set $E=\{1,\ldots, n\}$ with $\mathcal{A}$ an antichain of intervals in a fixed shifted linear order, that is, an order $i<i+1<\cdots < n<1<\cdots <i-1$ for some choice of $i$. 
All multi-path matroids are positroids \cite[Section 2.6]{Bonin-2024}.
Also, the classes of multi-path matroids and lattice path matroids are incomparable with the classes of laminar matroids and colaminar matroids: $Y_3$ is lattice path by \cite[Theorem 3.1]{Lattice-path-minors2010} but not laminar, $Y_3^*$ is lattice path but not colaminar, and since the class of multi-path matroids is dual-closed and there is a laminar matroid that is not transversal (as discussed in Section \ref{sec: laminar}) and therefore not multi-path, there is also a colaminar matroid that is not multi-path.
Since lattice path matroids are multi-path matroids, there are multi-path matroids achieving equality in \Cref{thm: classes of transversal matroids}. We summarize these observation as follows: 

\begin{corollary}
    For all integers $r,\ell\geq 1$, the maximum number of elements of a simple rank-$r$ lattice path matroid or multi-path matroid with no $U_{2,\ell+2}$-minor is $\ell(r - 1) + 1$.
\end{corollary}

\subsubsection{Non-positroids}

Finally, we consider the classes of bicircular and path-circular matroids, which are transversal but not necessarily positroids.
Given a graph $G$ with edge set $E$, we obtain a \emph{bicircular matroid} $B(G)$ whose independent sets are collections of edges for which each component contains at most a single cycle. 
Originally defined by Sim\~oes Pereira \cite{Bicircular-1972}, bicircular matroids have since appeared in many important conjectures in structural matroid theory (see \cite[Conjecture 6.1]{Exclude-uniform-matroid} or \cite[Conjecture 9.2]{Structure-theory}), and arise as a special case of the frame matroid of a biased graph \cite{Zaslavsky1991}.

Bicircular matroids are transversal. In fact, they can be characterized as precisely those transversal matroids $M(E, \mathcal{A})$ for which each $e\in E$ is contained in at most two sets of $\mathcal{A}$ \cite[Theorem 3.1]{Matthews-Bicircular}.

 The class of bicircular matroids is incomparable with positroids. By \cite[Theorem 4]{bicircular}, $M_3^*$, the dual of the matroid $M_3$ of \Cref{fig: rank-3 excluded minors}, is bicircular. 
 Since $M_3$ is not a positroid and positroids are closed under duality, $M_3^*$ is not a positroid.

\begin{figure}
    \centering
    \scalebox{0.7}{
    \begin{tikzpicture}[
    every node/.style={draw, rectangle, rounded corners, minimum width=2.8cm, align=center},
    level distance=2cm,
    sibling distance=3cm
]

\node (gammoid) at (0,4) [fill=gray!30]{Gammoid};

\node (positroid) at (-4,2) {Positroid};
\node (transversal) at (4,2) [fill=gray!30] {Transversal};

\node (pathcircular) at (4,0) {Path-circular};

\node (laminar) at (-6,-2) [fill=red!40] {Laminar};
\node (colaminar) at (-2,-2) {Colaminar};
\node (multi-path) at (2,-2) {Multi-path};
\node (bicircular) at (6,-2) {Bicircular};

\node (latticepath) at (2,-4) {Lattice path};

\draw[-] (gammoid) -- (positroid);
\draw[-] (gammoid) -- (transversal);

\draw[-] (laminar) -- (positroid);
\draw[-] (colaminar) -- (positroid);
\draw[-] (multi-path) -- (positroid);

\draw[-] (transversal) -- (pathcircular);

\draw[-] (transversal) -- (colaminar);

\draw[dashed] (pathcircular) -- (colaminar);

\draw[-] (bicircular) -- (pathcircular);
\draw[-] (multi-path) -- (pathcircular);

\draw[-] (multi-path) -- (latticepath);

\end{tikzpicture}}
    
    \caption{The classes of matroid considered in this section, ordered by inclusion. 
    The upper bound of \Cref{thm: main result with proof} holds for each class of matroids other than possibly transversal matroids or gammoids. 
    Moreover, they all actually achieve that bound other than the laminar matroids. While we know that not all path-circular matroids are colaminar, we leave the comparability between the two classes open. } 
    \label{fig:poset-of-classes}
\end{figure}

The class of bicircular matroids (with a small technical modification; see \cite{Bicircular-duality}) is closed under minors but not duality, and there are bicircular matroids achieving the equality in \Cref{thm: classes of transversal matroids} \cite[end of Section 6]{Bonin-deMier-LPMs}.

Path-circular matroids are a recently-defined minor-closed class of transversal matroids generalizing both multi-path matroids and bicircular matroids. 
The definition is more involved and can be found in \cite{path-circular}. 
In particular, the class of path-circular matroids contains all multi-path and bicircular matroids. 
Thus, it contains matroids that are not positroids and it also achieves the bound in \Cref{thm: classes of transversal matroids}. We summarize as follows: 

\begin{corollary}
   For all integers $r,\ell\geq 1$, the maximum number of elements of a bicircular matroid or a path-circular matroid with no $U_{2,\ell+2}$-minor is $\ell(r - 1) + 1$. 
\end{corollary}

\section{Future Work} \label{sec: future work}

Theorem \ref{thm: main} motivates several directions for future work.

\subsection{\texorpdfstring{$3$}{3}-connected positroids}
Theorem \ref{thm: main} shows that every simple rank-$r$ positroid with no $U_{2,\ell+2}$-minor and the maximum number of elements is not $3$-connected when $r \ge 3$.
This leads to the following question: what is the maximum number of elements of a simple $3$-connected rank-$r$ positroid with no $U_{2,\ell+2}$-minor?
Brylawski \cite{Brylawski-1971} proved that every $3$-connected matroid with no $M(K_4)$- or $U_{2,4}$-minors has at most $5$ elements, so this question is not interesting when $\ell = 2$.
However, for each $\ell \ge 3$ we will describe a family of $3$-connected $U_{2,\ell+2}$-minor-free positroids that generalize the rank-$r$ whirl, and which we believe achieve this maximum number of elements.

Let $r, \ell$ be integers with $r \ge 2$ and $\ell \ge 3$.
Let $W(r,\ell)$ be the matroid with a basis $B = \{b_1, b_2, \dots, b_r\}$ and $\lfloor \frac{\ell - 1}{2} \rfloor$ elements freely placed (see \cite[page 270]{Oxley-2011}) in the span of $\{b_i, b_{i+1}\}$ for all $i \in [r]$, reading indices modulo $r$.
Let $W(r, \ell)^+$ be obtained from $W(r, \ell)$ by freely placing one point in $\cl(\{b_1, b_2\})$.
Then $W(2, \ell) \cong U_{2,\lfloor \frac{\ell - 1}{2} \rfloor + 2}$, and $W(r, 3)$ and $W(r, 4)$ are both isomorphic to the rank-$r$ whirl $\mathcal W^r$ when $r \ge 3$.
Since the whirl $\mathcal W^r$ is $3$-connected (see \cite[Example 8.4.3]{Oxley-2011}) and $W(r,\ell)$ has $\mathcal W^r$ as a restriction, it follows from \cite[page 295, Exercise 5]{Oxley-2011} that $W(r,\ell)$ and $W(r, \ell)^+$ are $3$-connected.
And it is straightforward to check that the natural cyclic ordering of $E(W(r, \ell))$ (first $b_1$, then all elements in $\cl(\{b_1, b_2\}) - b_2$, then $b_2$, and so on) or $W(r,\ell)^+$ satisfies the condition of Theorem \ref{thm:Bonin's condition}, so $W(r, \ell)$ and $W(r, \ell)^+$ are positroids.
Finally, one can check that if $r \ge 4$ and $e \notin \{b_1, \dots, b_r\}$, then $\si(W(r, \ell)/e) \cong W(r-1, \ell)$, and that $\si(W(r, \ell)/b_i) \cong W(r-1,\ell) \del (\cl(\{b_{i-1},b_i, b_{i+1}) - \{b_{i-1}, b_{i+1}\})$, and from these observations one can prove that $W(r,\ell)$ is $U_{2,\ell+2}$-minor-free, and when $\ell$ is even, $W(r, \ell)^+$ is $U_{2, \ell+2}$-minor-free.
Noting that $|W(r, \ell)| = r + r\lfloor \frac{\ell-1}{2} \rfloor$, we make the following conjecture.

\begin{conjecture} \label{conj: 3-connected positroids}
For integers $r$ and $\ell$ with $r \ge 2$ and $\ell \ge 3$, every $3$-connected rank-$r$ positroid with no $U_{2, \ell+2}$-minor has at most $r\left(\lfloor\frac{\ell}{2}\rfloor+1\right)$ elements. 
\end{conjecture}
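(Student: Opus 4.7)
The approach I would take is induction on the rank $r$. The base case $r = 2$ is direct: a simple 3-connected rank-$2$ matroid is a uniform matroid $U_{2,k}$, and the $U_{2,\ell+2}$-minor-free condition forces $k \le \ell+1$, matching both parities of the conjectured formula. For the inductive step with $r \ge 3$, let $M$ be a simple 3-connected rank-$r$ positroid with no $U_{2,\ell+2}$-minor, and fix a total ordering $\prec$ of $E(M)$ as guaranteed by Theorem~\ref{thm:Bonin's condition}.

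The crux of the plan is a structural lemma: the long lines of $M$ assemble into a whirl-like cycle. More precisely, I would try to show that there exist $r$ long lines $L_1, \dots, L_r$ indexed cyclically with $|L_i \cap L_{i+1}| = 1$, with $L_i \cap L_j = \emptyset$ whenever $i, j$ are non-adjacent, and with $E(M) = L_1 \cup \cdots \cup L_r$. For each long line $L$, the restriction $M|L \cong U_{2,|L|}$ is connected, and the contraction $M/L$ can be shown to be connected using 3-connectedness (once small configurations are ruled out), so Theorem~\ref{thm:Bonin's condition} makes $L$ an interval of the cyclic order induced by $\prec$. Combining this with Propositions~\ref{prop: rank 3 excluded minor} and~\ref{prop: rank 4 excluded minor} should then rule out the remaining bad configurations---two long lines meeting in more than one point, elements on no long line (which would create a 2-separation under 3-connectedness), or a tree-like rather than cyclic arrangement of long lines.

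With the whirl-like structure in hand, setting $a_i := L_{i-1} \cap L_i$ gives $|E(M)| = \sum_{i=1}^r(|L_i| - 1)$. To close the induction, I would pick an interior point $e \in L_i \setminus \{a_i, a_{i+1}\}$ of some long line: the simplification $\si(M/e)$ should again be a 3-connected rank-$(r-1)$ positroid with no $U_{2,\ell+2}$-minor whose long lines form a cycle of length $r-1$, so by induction $|\si(M/e)| \le (r-1) + (r-1)\lfloor(\ell-1)/2\rfloor$ (plus $1$ if $\ell$ is even). Since $|E(M)| = |\si(M/e)| + (|L_i| - 1)$, the remaining task is to bound $|L_i| - 1$ on average by $1 + \lfloor(\ell-1)/2\rfloor$, rather than simply by $\ell$.

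The main obstacle is precisely this averaged line-length bound. The $U_{2,\ell+2}$-minor-free condition only restricts each line individually to $|L_i| \le \ell+1$, but the target constant is roughly half as large, matching the fact that every line of $W(r,\ell)$ has exactly $2 + \lfloor(\ell-1)/2\rfloor$ elements. I expect that proving $|L_i| + |L_{i+1}| \le 3 + 2\lfloor(\ell-1)/2\rfloor$ for each pair of adjacent long lines (with at most one exceptional pair when $\ell$ is even) will require contracting a well-chosen element lying outside $L_i \cup L_{i+1}$ so that the resulting rank-$3$ minor contains a common line through three points each lying on a second long line; Proposition~\ref{prop: rank 3 excluded minor} would then force a forbidden structure whenever both adjacent lines substantially exceed half-length. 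A successful proof may also need to extend the excluded-minor list of Section~\ref{sec: excluded minors}, or invoke the decorated permutation associated to $M$ to control the global distribution of long-line lengths across the cycle.
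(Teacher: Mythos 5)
First, an important mismatch in kind: the statement you are proving is a \emph{conjecture} in the paper. The authors do not prove it; they only construct the candidate extremal family $W(r,\ell)$ and $W(r,\ell)^+$, note that the bounds hold for $r=2$ and $r=3$, and cite Oxley's work for ternary matroids when $\ell=3$. So there is no proof of theirs to compare yours against, and your proposal must stand on its own. As written it does not: it is an outline containing one step that is demonstrably false and one admitted missing ingredient at the quantitative core.

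The false step is the structural lemma. You assert that every element of $M$ lies on a long line (claiming that an element on no long line ``would create a 2-separation under 3-connectedness'') and that the long lines form a cycle covering $E(M)$. This already fails for $U_{3,\ell+2}$ with $\ell \ge 3$: it is a $3$-connected rank-$3$ positroid with no $U_{2,\ell+2}$-minor (contracting any element gives $U_{2,\ell+1}$), yet it has no long lines at all, since every rank-$2$ flat of a uniform matroid of rank at least $3$ has exactly two elements. Higher-rank uniform matroids give similar examples in every rank, so any argument that funnels all of $E(M)$ onto a whirl-like cycle of long lines cannot be the whole story; you would at minimum need a separate bound on elements not lying on long lines, and you cannot expect the extremal structure to be forced in the way your induction requires. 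The second gap is the one you flag yourself: the $U_{2,\ell+2}$-minor-free hypothesis only gives $|L_i|\le \ell+1$ per line, while the conjectured bound needs the \emph{average} line length to be about $2+\lfloor\frac{\ell-1}{2}\rfloor$. Note that this halving cannot come from a $U_{2,\ell+2}$-minor alone: in $W(r,\ell)$ the plane spanned by two adjacent long lines contains nothing outside those lines, so no single contraction merges them into one long line. Whatever forces adjacent lines to be short must come from positivity itself, via Theorem~\ref{thm:Bonin's condition} or an enlarged excluded-minor list beyond Propositions~\ref{prop: rank 3 excluded minor} and~\ref{prop: rank 4 excluded minor}, and that argument is entirely absent from the proposal. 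Finally, the inductive step silently assumes that $\si(M/e)$ is again $3$-connected, which also needs justification. In short: the base case and the bookkeeping are fine, but the two load-bearing lemmas are, respectively, false as stated and unproven, so this remains a research plan rather than a proof of the (still open) conjecture.
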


As evidence, these bounds hold when $r = 2$, and it is not difficult to prove that they hold when $r = 3$.
By a result of Oxley \cite[Corollary 4.3]{Oxley-1987}, they also hold for $\ell = 3$ in the special case that the positroids are ternary.
Conjecture \ref{conj: 3-connected positroids} may be approachable with techniques from \cite{Oxley-1987} and \cite{Quail-2024}.

\subsection{Other \texorpdfstring{$M(K_4)$}{M(K4)}-minor-free classes}\label{subsec:MK4-free}

We return to transversal matroids and gammoids.
The methods of Section \ref{sec: the main proof} were not sufficient to conclusively resolve the following problem:

\begin{problem} \label{prob: extremal function for gammoids}
Does the bound of Theorem \ref{thm: main} hold for transversal matroids or gammoids?
\end{problem}

Problem \ref{prob: extremal function for gammoids} seems approachable since, as we showed in \Cref{subsec:minor-close-transversal}, all of the matroids of Figures \ref{fig: rank-3 excluded minors} and \ref{fig: rank-4 excluded minor} are not transversal.
The bound of Theorem \ref{thm: main} would be best-possible because one can show via \cite[page 264, Exercise 10]{Oxley-2011} that some (but not all) of the matroids in $\cM_{r,\ell}$ are transversal.
However, the class of transversal matroids is not closed under contraction, and we do make use of contraction several times in the proof of Theorem \ref{thm: main}.
Because gammoids are minors of transversal matroids, it is then natural to consider Problem \ref{prob: extremal function for gammoids} for gammoids.
The difficulty here is that while $M(K_4)$ and $M_6$ are not gammoids \cite[page 644]{Oxley-2011}, one can show via Theorem \ref{thm: Mason result} that all of the other matroids of Figures \ref{fig: rank-3 excluded minors} and \ref{fig: rank-4 excluded minor} are strict gammoids.

\begin{proposition}
Let $M$ be a simple rank-$3$ matroid so that each line of $M$ has at most $3$ points.
Then $M$ is a strict gammoid if and only if for each spanning set $X \subseteq E(M)$ the matroid $M|X$ has at most $|X| - 3$ long lines.
\end{proposition}
\begin{proof}
We recall the parameter $\alpha_M$ from Theorem \ref{thm: Mason result}.
Each rank-$2$ set $X$ satisfies $\alpha_M(X) = 0$ (if $|X| = 2$) or $\alpha_M(X) = 1$ (if $|X| = 3$).
By Theorem \ref{thm: Mason result}, $M$ is a strict gammoid if and only if $\alpha_M(X) \ge 0$ for each spanning set $X \subseteq E$.
Let $X \subseteq E$ be spanning.
Each proper flat $F$ of $M|X$ with at most two points satisfies $\alpha_M(F) = 0$, and each long line $L$ of $M|X$ satisfies $\alpha_M(L) = |L| - 2 = 1$.
Therefore $\sum_{F \in \overline{\cF}(X)}\alpha_M(F)$ is equal to the number of long lines of $M|X$, so $\alpha_M(X)\ge 0$ if and only if $M|X$ has at most $|X| - 3$ long lines.
\end{proof}

So while Problem \ref{prob: extremal function for gammoids} is interesting, it will require new ideas to solve.

We mention that one can extend Problem \ref{prob: extremal function for gammoids} beyond gammoids.
The classes of \emph{strongly base-orderable matroids} \cite[page 435, Exercise 10]{Oxley-2011} and \emph{$k$-base-orderable matroids} \cite{Bonin-Savitsky-2016} are classes that impose strong basis exchange conditions.
Each of these is a minor-closed class that strictly contains the class of gammoids and does not contain $M(K_4)$ \cite[page 435, Exercise 11]{Oxley-2011}, so one approach for Problem \ref{prob: extremal function for gammoids} would be to first determine the extremal bound for the class of strongly base-orderable matroids or $k$-base-orderable matroids.
We leave this an open direction for future work.

\subsection{Oriented matroids}

A rank-$r$ matroid $M$ is \emph{orientable} if there is a function $\chi \colon E(M)^r \to \{-1,0,1\}$ (called a \emph{chirotope}) so that $M$ satisfies a basis exchange property with respect to $\chi$; see \cite{Ziegler-1996} for more details and background.
Since positroids are precisely the positively orientable (meaning that the image of $\chi$ is contained in $\{0,1\}$) matroids  \cite{Ardila-Rincon-Williams-2017}, it is natural to ask the following question.

\begin{problem}
For all positive integers $r$ and $\ell$, what is the maximum number of elements of a simple rank-$r$ orientable matroid with no $U_{2, \ell+2}$-minor?
\end{problem}

Since Dowling geometries over the group of order $2$ are $\bR$-realizable and therefore orientable, there is a lower bound of $2\binom{r}{2} + r$.
And since cyclic Reid geometries are non-orientable (see \cite{Florez-Forge-2007}), it follows from work of Geelen, Nelson, and Walsh \cite{Geelen-Nelson-Walsh-2024} (Theorem 9.4 together with Propositions 10.3 and 10.4) that there is an upper bound
of $\ell\binom{r}{2} + r$ when $r$ is sufficiently large.
To close this gap between the two bounds one must determine the orientability of Dowling geometries over the cyclic group of order $t$ when $t \ge 3$, which may be of independent interest.

We say that a matroid $M$ together with a chirotope $\chi$ is an \textit{oriented matroid} $\mathcal{M}=(M,\chi)$, and that $\mathcal{M}$ is an orientation of $M$.
We say $\mathcal{M}$ is \textit{simple} if $M$ is simple.
One may ask whether every orientation of a sufficiently large simple orientable matroid admits a positively oriented $U_{2,\ell+2}$ minor, which we denote by $U_{2,\ell+2}^+$. 
Oriented matroid restrictions and contractions \cite[Section 3.5]{OrientedMatroidBook} can be described in a straightforward way at the level of the chirotope and, importantly for us, act by (unoriented) restriction and contraction on the underlying matroid. 
That is, if $\mathcal{M}$ is an oriented matroid on ground set $E$ with underlying matroid $M$, then the restriction of $\mathcal{M}$ to $E'\subset E$ has underlying matroid $M|E'$, and similarly for contraction. 
One quickly observes that a negatively oriented matroid (meaning that the image of $\chi$ is contained in $\{-1,0\}$) has negatively oriented minors and hence can never contain a positively oriented $U_{2,\ell+2}$. 
To account for this, we amend our question to determining the minimum size of an oriented matroid which is guaranteed to admit either a positively \textit{or} negatively oriented $U_{2,\ell+2}$-minor (denoted by $U_{2,\ell+2}^-$).

{

\begin{problem} \label{prob: oriented minors}
For all positive integers $r$ and $\ell$, what is the maximum number of elements of a simple rank-$r$ oriented matroid with no $U_{2, \ell+2}^+$- or $U_{2, \ell+2}^-$-minor?
\end{problem}

We can give an upper bound using Ramsey theory.
Let $\cM$ be a simple rank-$r$ oriented matroid with $n$ elements, and let $n_0$ be the Ramsey number $R(\ell + 2, \ell + 2)$.
If $n > (n_0^{r} - 1)/(r - 1)$, then by Theorem \ref{thm: exlude a line} $\cM$ has a minor $\cN$ with underlying matroid $U_{2,n_0}$.
Color an edge of $K_{n_0}$ blue if that pair is positive in $\cN$, and color an edge of $K_{n_0}$ red if that pair is negative in $\cN$.
Since $n_0 = R(\ell + 2, \ell + 2)$, there is some subset of $\ell + 2$ elements of $n_0$ for which either all pairs are positive in $\cN$ (giving a $U_{2, \ell+2}^+$-restriction) or all pairs are negative (giving a $U_{2, \ell+2}^-$-restriction).
Therefore $(n_0^r - 1)/(r - 1)$ is an upper bound for Problem \ref{prob: oriented minors}, but certainly this bound can be improved.
We leave this for future work.

\section*{Acknowledgments}

We thank the anonymous reviewers for their careful reading and insightful comments, which have greatly improved the paper.

\bibliographystyle{abbrv}
\bibliography{References, Matroid_structure_references}

\end{document}